\newtheorem{rema}{Remark}
\newtheorem{lemm}{Lemma}
\newtheorem{theo}{Theorem}
\newcommand{\R}[1][]{\ensuremath{{\mathbb{R}^{#1}} }}
\renewcommand{\S}[1][]{\ensuremath{{\mathbb{S}^{#1}} }}
\renewcommand{\H}[1][]{\ensuremath{{\mathbb{H}^{#1}} }}
\newcommand{\M}{{\cal M}}
\newcommand{\s}{{\cal S}}
\newcommand{\<}{\langle}
\renewcommand{\>}{\rangle}
\newcommand{\ga}{\gamma}
\newcommand{\al}{\alpha}
\newcommand{\eps}{\epsilon}
\newcommand{\ka}{\kappa}
\newcommand{\be}{\beta}
\newcommand{\si}{\sigma}
\date{}
\title{Marginally trapped submanifolds in Lorentzian space forms and in the Lorentzian product of a space form by the real line}
\author{ Henri Anciaux\footnote{partially supported by CNPq (PQ 302584/2007-2) and Fapesp (2010/18752-0)}, Yamile Godoy\footnote{FaMAF-CIEM; partially supported by CONICET, FONCyT, SECyT (UNC)}}
\begin{document}

\maketitle

\centerline{\textbf {\large{Abstract}}}

\bigskip

{\small We give local, explicit representation formulas for $n$-dimensional spacelike submanifolds which are marginally trapped 
in the Minkowski space $\R^{n+2}_1,$ the de Sitter space $d\S^{n+2},$ the anti de Sitter space $Ad\S^{n+2}$ and the Lorentzian 
products $\S^{n+1} \times \R$ and $\H^{n+1} \times \R$ of the sphere and the hyperbolic space  by the real line.}

\bigskip

\centerline{\small \em 2000 MSC: 53A10,  53C42
\em }

%\medskip

\section*{Introduction}

Let $\s$ be a  submanifold of a pseudo-Riemannian manifold $({\cal N},g).$ If the induced metric on $\s$ is non-degenerate, one may define its mean curvature $\vec{H},$ a normal vector field along $\s.$ We shall say that $\s$ is \em marginally trapped \em if $\vec{H}$ is a null vector, i.e.\ $g(\vec{H},\vec{H})$ vanishes identically.\footnote{Some authors call such submanifolds \em quasi-minimal \em or \em pseudo-minimal. \em} Of course this may happen only if $\s$ has codimension greater than two and if the induced metric on the normal bundle is indefinite.

The case of a spacelike surface $\s$ of a four-dimensional Lorentzian manifold $({\cal N},g)$ is the most interesting  because of its physical interpretation  in the setting of General Relativity: marginally outer trapped surfaces (MOTS) play a fundamental role in the study of black holes and spacetime singularities (\cite{Pe},\cite{CGP}).
Despite their physical relevance and the fact that marginally trapped is the most natural curvature condition which is purely pseudo-Riemannian,
these  submanifolds
 are still not very well understood.  After a seminal paper (\cite{Ro}) where marginally trapped submanifolds are called \em semi-minimal, \em there have been recent work on the classification of
 marginally trapped surfaces satisfying
several additional properties, such as being Lagrangian (\cite{CD}), isotropic (\cite{CFG}), having flat normal bundle (\cite{AGM}), constant curvature (\cite{Ch}) or  positive relative nullity (\cite{CVdV},\cite{VdV}). On the other hand, in \cite{Pa} (see also \cite{APa}) a very interesting minimization property has been discovered concerning
marginally trapped surfaces: a minimal spacelike surface of Minkowski space $\R^4_1$, although it is unstable,  minimizes the area among  marginally trapped surfaces satisfying a natural boundary data.

The purpose of this paper is to give local, explicit representation formulas for $n$-dimensional marginally trapped submanifolds 
in some of the simplest Lorentzian manifolds: the 
 Minkowski space $\R^{n+2}_1,$ the non-flat Lorentzian spaces forms, i.e.\ the de Sitter space $d\S^{n+2}$ and the anti de Sitter space $Ad\S^{n+2},$ and finally the Lorentzian 
products $\S^{n+1} \times \R$ and  $\H^{n+1} \times \R$ of the sphere and the hyperbolic space by the real line.

Our construction is inspired by \cite{An}  and is based, although not explicitely, on the contact structure enjoyed by the space of null geodesics of a pseudo-Riemannian manifold (\cite{GS},\cite{KT}).  For example, in the case of Minkowski space $\R^{n+2}_1,$ a spacelike, $n$-dimensional submanifold $\bar{\s}$ is locally described in terms of its height function (i.e.\ its timelike coordinate) and a hypersurface $\s$
of $\R^{n+1}$. Then the marginally trapped condition amounts to a simple algebraic relation between the height function of $\bar{\s}$ and the second fundamental form of $\s.$ The interpretation in terms of contact geometry is the following:  the set of null geodesics normal to $\s$ is a Legendrian submanifold in
 the set of null geodesics of $\R^{n+2}_1,$ which is contactomorphic to the unit tangent bundle of $\R^{n+1}$; then
 the hypersurface $\s$ is nothing but the projection on the basis $\R^{n+1}$ of this Legendrian submanifold.

This idea works in the same way in the other simple Lorentzian spaces $d\S^{n+2}, Ad\S^{n+2}$, $\S^{n+1} \times \R$ and  $\H^{n+1} \times \R$. The construction can be performed in Robertson-Walker spaces as well, but the analysis becomes quite more involved since the equation relating the height function of $\bar{\s}$ and the second fundamental form of $\s$ is not any more polynomial, but remains algebraic. This case is discussed in \cite{AC}.

  The authors would like to thank Marcos Salvai for valuable suggestions. 

\section{Statement of results}
Let $(\R^{n+1},\<.,.\>_0)$ be the Euclidean space endowed with its canonical Riemannian metric 
$$\<.,.\>_0:= dx_1^2 + ... + dx_{n+1}^2,$$ and denote by 
$$\<.,.\>_1:=\<.,.\>_0 - dx_{n+2}^2$$
 the flat Lorentzian metric of the Cartesian product $\R^{n+2}_1 = \R^{n+1}\times \R.$

We denote by $\iota: \R^n \to \R^{n+1}$ %(resp.\ $\S^n \to \S^{n+1}\subset \R^{n+2}$, $\H^n \to \H^{n+1} \subset \R^{n+2}_1$)
 the canonical totally geodesic embedding $\iota(x)=(x,0)$
and denote by $\nu_0=(0,...,0,1)$ its (constant) unit normal vector. 

We recall that the second fundamental form $h$ of an immersion $\M \to ({\cal N},g)$ with non degenerate first fundamental form is the symmetric tensor
$h :T\M \times T\M \to N \M$
defined by $h(X,Y):= (D_X Y)^{\perp}$, where $(.)^\perp$ denotes the projection onto the normal space $N\M$ and $D$ is the Levi-Civita connection
of $g.$ If $\nu$ is a normal vector field along $\M$, we have the following important relation: $g(h(X,Y), \nu) =- g(D_X \nu, Y).$ The mean curvature vector
of the immersion is the trace of $h$ with respect to the induced metric.

\begin{theo} \label{one} 
Let $\Omega$ be an open domain of $\R^n$ and $\tau \in C^2(\Omega)$.
Then the immersion 
$\bar{\varphi} : \Omega \to \R^{n+2}_1$  defined by
%$$\bar{\varphi}(x)= (\iota(x) + \tau \nu_0 , \tau) = (\iota(x),0) + \tau(x) (\nu_0,1)$$
$$\bar{\varphi}(x)=  \big(\iota(x),0 \big) + \tau(x) \big(\nu_0,1 \big)$$
 is flat  and its
  second fundamental form  is  given by
$$\bar{h} (X,Y)= Hess_{\tau}(X,Y) (\nu_0,1).$$
In particular $\bar{\varphi}$ has null second fundamental form and is therefore marginally trapped.
Conversely, any $n$-dimensional spacelike submanifold with null second fundamental form is locally congruent to the image of such an immersion.

\smallskip

 Let $\varphi$ be an immersion of class $C^4$ of an $n$-dimensional manifold $\M$ into  $\R^{n+1}$ and denote by $ \nu$ the Gauss map of $\varphi$, which is therefore $\S^{n}$-valued.
Assume that $\varphi$ admits  $p$ distinct, non-vanishing principal curvatures $ \ka_1 , ..., \ka_p, \, p \geq 2$ with multiplicity $m_i$ and denote by $\tau_i$  the $p-1$ roots of the polynomial
 $$ P(\tau):=\sum_{i=1}^{p} m_i \prod_{j \neq i}^{p}(\ka_j^{-1}- \tau).$$
%In particular, the map $t_i$ are of class $C^2$ and satisfy $r_i < \tau_i < r_{i+1}.$
Then the  $p-1$  immersions 
$\bar{\varphi}_i : \M \to \R^{n+2}_1$ defined by
$$\bar{\varphi}_i= (\varphi + \tau_i  \nu ,\tau_i)$$  
are marginally trapped.

Conversely, any $n$-dimensional marginally trapped submanifold of  $\R^ {n+2}_1$ whose second fundamental form is not null  is locally congruent to the image of 
 such an immersion.

\smallskip

In particular, in the $n=2$ case, let   $\varphi$ be a non-flat $C^4$-immersion  of a surface $\M$ into $\R^3$ 
which is  free of umbilic points. Denote by $\nu$ its Gauss map, by $H$ and $K$  the mean curvature with respect to $ \nu$ and the Gaussian curvature of $\varphi.$ Then immersion $\bar{\varphi}: \M \to \R^4_1$ defined by
$$  \bar{\varphi}:=\left(\varphi + \frac{H}{K}  \nu ,  \frac{H}{K} \right),$$
is  marginally trapped.   As a corollary, an immersed surface which is contained in a time slice $\{x_4=const.\}$ is marginally trapped if and only if $\frac{H}{K}$ is constant, i.e.\  $\varphi$ is linear Weingarten.

\end{theo}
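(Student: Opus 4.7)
My plan is to treat the three parts of the theorem by direct computation, exploiting that the rank-two Lorentzian normal bundle of a spacelike codimension-two submanifold in $\R^{n+2}_1$ admits two null directions, and that marginal trapping is exactly the condition that $\vec{H}$ lies in one of them.

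For the first part I will observe that $(\nu_0,1)$ is a constant null vector. Differentiating yields $\bar{\varphi}_*X = (X,0) + X(\tau)(\nu_0,1)$, whose cross-term with $(\nu_0,1)$ vanishes and whose norm along $(\nu_0,1)$ is zero, so the induced metric reduces to $\<X,Y\>_0$, confirming flatness and the spacelike character. The flat ambient connection then gives $D_X\bar{\varphi}_*Y = Hess_\tau(X,Y)(\nu_0,1)$, which is already normal and null. For the converse I suppose $\bar{\s}$ is spacelike with $\bar{h}$ everywhere null, so $\bar{h} = \sigma N$ for some null normal $N$; the identity $\<N,D_X\bar{\varphi}_*Y\>_1 = \sigma(X,Y)\<N,N\>_1 = 0$ combined with $\<N,D_XN\>_1 = 0$ and the Lorentzian structure of the normal bundle forces $D_XN \in \mathrm{span}(N)$. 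A conformal rescaling then makes $N$ parallel, hence constant in $\R^{n+2}_1$; an ambient isometry moves it to $(\nu_0,1)$, and the tangential hyperplane constraint delivers the parametrization.

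For the second part I will check that $N_+ := (\nu,1)$ is a null normal to $\bar{\varphi}_i$ and compute $D_XN_+ = (-\varphi_*SX,0)$, where $S$ is the Weingarten operator of $\varphi$. This gives
$$\<\bar{h}(X,Y),N_+\>_1 = -\<\bar{\varphi}_*Y,D_XN_+\>_1 = g_0(S(I-\tau S)X,Y),$$
while the induced metric is $\bar{g}(X,Y) = g_0((I-\tau S)^2X,Y)$. Taking trace with respect to $\bar{g}$ produces $\mathrm{tr}(S(I-\tau S)^{-1}) = \sum_i m_i\ka_i/(1-\tau\ka_i)$, and clearing denominators together with the nonzero factor $\prod_j\ka_j$ shows this vanishes precisely when $P(\tau)=0$. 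In any dual null frame $N_\pm$ the decomposition $\vec{H} = -(\mathrm{tr}\<\bar{h},N_-\>)N_+ - (\mathrm{tr}\<\bar{h},N_+\>)N_-$ makes $\<\vec{H},\vec{H}\>_1$ proportional to the product of the two traces, so marginal trapping is equivalent to the vanishing of one of them; hence each root $\tau_i$ of $P$ delivers a marginally trapped immersion. For the converse, since $\bar{h}$ is not null, I take the nonzero null normal direction singled out by $\vec{H}$ (or any null normal in the exceptional minimal case), normalize its last coordinate to $1$ to write it as $(\nu,1)$ with $|\nu|=1$, and set $\tau := \bar{\varphi}_{n+2}$ and $\varphi := \bar{\varphi}-\tau(\nu,1)$ (spatial projection). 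The orthogonality of $(\nu,1)$ to $\bar{\varphi}_*X$ reduces to $\<\nu,\varphi_*X\>_0 = 0$, so $\nu$ is the Euclidean Gauss map of $\varphi$, and reversing the forward argument forces $\tau$ to be a root of $P$.

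The $n=2$ case is the specialization $p=2,\ m_1 = m_2 = 1$: here $P(\tau) = \ka_1^{-1}+\ka_2^{-1}-2\tau = 2H/K-2\tau$ (with $H=(\ka_1+\ka_2)/2$ and $K=\ka_1\ka_2$), whose unique root is $\tau = H/K$, yielding the stated immersion. The corollary follows because $\bar{\varphi}$ lies in a time slice $\{x_4 = \mathrm{const}\}$ exactly when $\tau = H/K$ is constant, which is the linear Weingarten condition. The main obstacle will be the bookkeeping in the second part, in particular recognizing the trace $\sum_i m_i\ka_i/(1-\tau\ka_i)$ as a rescaling of $P(\tau)$ and ensuring invertibility of $I-\tau S$, which is precisely the condition $\tau\ka_i \neq 1$ (i.e.\ avoidance of the focal set of $\varphi$).
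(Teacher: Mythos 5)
Your approach coincides with the paper's: the same decomposition $\bar{\varphi}=(\varphi+\tau\nu,\tau)$ with null normal $(\nu,1)$, the same formulas $\bar{g}=g_0((I-\tau S)^2\cdot,\cdot)$ and $\langle\bar h,(\nu,1)\rangle_1=g_0(S(I-\tau S)\cdot,\cdot)$, the same reduction of marginal trapping to $\sum_i m_i\ka_i/(1-\tau\ka_i)=0$ and hence to $P(\tau)=0$, and the same treatment of the null--second--fundamental--form case. The computations you outline are correct.

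Two points you leave open are, however, genuine gaps that the paper closes explicitly. First, in the converse direction you set $\varphi:=\bar{\varphi}-\tau(\nu,1)$ and immediately call $\nu$ ``the Euclidean Gauss map of $\varphi$''; but this null projection need not be an immersion (its differential $d\varphi=d\psi-\tau\,d\nu-d\tau\,\nu$ can degenerate), so the converse as you state it would fail at such points. The paper fixes this by first proving that $(\varphi,\nu)$ is an immersion and $\langle d\varphi,\nu\rangle_0=0$, and then showing that for all but at most $n$ values of $t$ the translate $\varphi+t\nu$ is an immersion with Gauss map $\nu$; one then replaces $\bar{\varphi}$ by the vertical translate $\bar{\varphi}-(0,t_0)$, which is a congruence. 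Second, you flag but do not resolve the invertibility of $I-\tau_i S$, and you never establish that $P$ actually has $p-1$ real roots --- both are needed even for the forward direction, since $\bar{\varphi}_i$ must be spacelike. The paper settles both at once by an interlacing argument: writing $r_i=\ka_i^{-1}$ in increasing order, the signs of $P(r_i)=m_i\prod_{j\neq i}(r_j-r_i)$ alternate, so $P$ (of degree $p-1$) has exactly $p-1$ real roots with $r_i<\tau_i<r_{i+1}$; in particular no $\tau_i$ equals any radius of curvature, so each $I-\tau_i S$ is invertible and each $\bar{\varphi}_i$ is a spacelike immersion. With these two lemmas supplied, your argument is complete and matches the paper's proof.
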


It may be interesting to relate the latter formula to one found by B. Palmer: it is proved in \cite{Pa} that, given $\Omega$ an open subset of $\S^2$
and $f \in C^4(\Omega),$  the immersion $\varphi: \Omega \to \R^4_1$ defined by
\begin{equation} \label{zero}
\bar{\varphi}(x) = \left( \nabla f (x)  + f(x) x , -f(x) - \frac{1}{2} \Delta f(x) \right),
\end{equation}
where $\nabla$ and $\Delta$ denote the gradient and Laplace operators w.r.t.\ the round metric,
 is marginally trapped.

We first observe that a convex surface of Euclidean space  may be constructed from its support function: let
$\varphi : \Omega \to \R^3$ parametrized by its   unit normal vector $\nu$  and introduce $f(\nu)=\< \varphi(\nu),\nu \>_0 ,$ i.e.\ $f$ is the support function of $\varphi.$ Hence we have  (see \cite{AG}): 
\begin{equation} \label{uno}
\varphi(\nu)=f(\nu)\nu + \nabla f(\nu).
\end{equation}
It will be seen along the proof of Theorem 1 that a spacelike immersion $\bar{\varphi}: \Omega \to \R^4_1$ such that 
the null geodesic $\{ (\varphi(\nu),0) + t(\nu,1) | \, t \in \R \}$ crosses orthogonally the surface  $\bar{\varphi}(\Omega)$ at $\bar{\varphi}(\nu)$ takes the form
$$ \bar{\varphi}(\nu) = \left( \varphi(\nu) +\tau(\nu) \nu, \tau(\nu)  \right),$$
where $\tau$ is a smooth real map on $\Omega.$ Moreover, the mean curvature vector of $\bar{\varphi}$ is collinear to the null vector field $\bar{ \nu}=(\nu,1)$ if and only if 
$\tau= \frac{H}{K}.$
On the other hand, the following formula holds (see \cite{AG}):
\begin{equation} \label{dos}
\frac{H}{K} = -f- \frac{1}{2} \Delta f,\end{equation} 
so using Formulas (\ref{uno}) and (\ref{dos}) together with the formula $ \bar{\varphi}:=\left(\varphi + \frac{H}{K}  \nu ,  \frac{H}{K} \right)$ of Theorem \ref{one},
 we recover Palmer's formula (Formula (\ref{zero})).

\medskip

The same construction may be applied in the case of the Lorentzian  space forms. In order to state the result we  set
$$\S^{n+1}:= \{  x \in \R^{n+2} |  \,  \<x,x\>_0 =1 \} \quad
\mbox{and}
\quad \H^{n+1}:= \{  x \in \R^{n+2} |  \,  \<x,x\>_1 = -1 \}.$$
These are the hyperquadric models of the $n+1$-dimensional Riemannian space forms.
Similarly  the $(n+2)$-dimensional Lorentzian space forms are defined as follows:
$$d \S^{n+2}:= \{  x \in \R^{n+3} |  \,  \<x,x\>_1 = 1 \} \quad \mbox{and} \quad
 Ad\S^{n+2} := \{ x\in \R^{n+3} |  \,  \<x,x\>_2 = -1 \},$$
where 
$$ \<.,.\>_2 := dx_1^2 + ... + dx_{n+1}^2 -  dx_{n+2}^2 - dx_{n+3}^2 .$$

\begin{theo} \label{unbis}

Let $\Omega$ be an open domain of $\S^n$ (resp.\ $\H^n$) and $\tau \in C^2(\Omega) $.  Denote by $\iota: \S^n \to \S^{n+1}\subset \R^{n+2}$ (resp.\  $\H^n \to \H^{n+1} \subset \R^{n+2}_1$)
 the canonical totally geodesic embedding $\iota(x)=(x,0)$
and denote by $\nu_0=(0,...,0,1)$ the corresponding (constant) unit normal vector. 
Then the immersion 
$\bar{\varphi} : \S^n \to d\S^{n+2}$   (resp.\  $\H^n \to Ad\S^{n+2}$) defined by
%$$\bar{\varphi}(x)= (\iota(x) + \tau \nu_0 , \tau) = (\iota(x),0) + \tau(x) (\nu_0,1)$$
$$\bar{\varphi}(x)=  \big(\iota(x),0 \big) + \tau(x)  \big(\nu_0,1 \big)$$
 is flat and its second fundamental form  is  given by
$$\bar{h} (X,Y)= Hess_{\tau}(X,Y) (\nu_0,1).$$
In particular $\bar{\varphi}$ has null second fundamental form and is therefore marginally trapped. 
Conversely, any $n$-dimensional spacelike submanifold with null second fundamental form is locally congruent to the image of such an immersion.

\smallskip

 Let $\varphi$ be an immersion of class $C^4$ of an $n$-dimensional manifold $\M$ into  $\S^{n+1}$ (resp.\  $\H^{n+1}$) and denote by $ \nu$ the Gauss map of $\varphi$, which is therefore $\S^{n+1}$-valued (resp.\ $d\S^{n+1}$-valued).
Assume that $\varphi$ admits  $p$ distinct, non-vanishing principal curvatures $ \ka_1 , ..., \ka_p, \, p \geq 2$ with multiplicity $m_i$ and denote by $\tau_i$  the $p-1$ roots of the polynomial
 $$ P(\tau):=\sum_{i=1}^{p} m_i \prod_{j \neq i}^{p}(\ka_j^{-1}- \tau).$$
%In particular, the map $t_i$ are of class $C^2$ and satisfy $r_i < \tau_i < r_{i+1}.$
Then the  $p-1$  immersions 
$\bar{\varphi}_i : \M \to d\S^{n+2}$ (resp.\ $Ad\S^{n+2}$) defined by
$$\bar{\varphi}_i= (\varphi + \tau_i  \nu ,\tau_i)$$  
are marginally trapped.

Conversely, any $n$-dimensional marginally  trapped submanifold of $d\S^{n+2}$ (resp.\ $of Ad\S^{n+2}$) whose second fundamental form is not null  is locally congruent to the image of 
 such an immersion.

\end{theo}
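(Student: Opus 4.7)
The plan is to follow the strategy of Theorem~\ref{one} essentially verbatim, replacing the flat ambient $\R^{n+2}_1$ by $d\S^{n+2}$ (resp.\ $Ad\S^{n+2}$) viewed as a hyperquadric in $\R^{n+3}$ equipped with $\<.,.\>_1$ (resp.\ $\<.,.\>_2$), and reading the Levi-Civita connection off from the flat connection of $\R^{n+3}$ by subtracting the radial component along $\bar{\varphi}$. Two structural facts transplant the flat argument: the chain $\S^n \hookrightarrow \S^{n+1} \hookrightarrow d\S^{n+2}$ (and its $\H$-analogue) consists of totally geodesic embeddings, so tangential components of ambient derivatives reproduce the intrinsic Levi-Civita connections, and the vector $(\nu_0,1) \in \R^{n+3}$ remains null for $\<.,.\>_1$, with the line $t \mapsto (\iota(x)+t\nu_0,\, t)$ a null geodesic of $d\S^{n+2}$ parametrised affinely by $t$.

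For the first (null $\bar{h}$) assertion, I would verify algebraically that $\bar{\varphi}(x) = (\iota(x)+\tau(x)\nu_0,\, \tau(x))$ lies in $d\S^{n+2}$ using $\<\iota(x),\nu_0\>_0 = 0$, differentiate once to obtain $d\bar{\varphi}(X) = (d\iota(X),0) + d\tau(X)(\nu_0,1)$ and reduce the first fundamental form to the round metric on $\iota(\S^n)$ (the $d\tau$-contribution being absorbed by nullity of $(\nu_0,1)$), then differentiate once more in $\R^{n+3}$ and use the totally geodesic character of $\iota(\S^n)\subset\S^{n+1}$ together with the hyperquadric shape operator of $\S^{n+1}\subset\R^{n+2}$ to obtain
\[D^{\R^{n+3}}_X d\bar{\varphi}(Y) = d\bar{\varphi}(\nabla^{\S^n}_X Y) + Hess_{\tau}(X,Y)(\nu_0,1) - \<X,Y\>_0\,(\iota(x),0).\]
Subtracting the $\bar\varphi$-component (which carries the ambient curvature of $d\S^{n+2}$) leaves a $d\S^{n+2}$-normal part collinear with the null vector $(\nu_0,1)$, which yields the stated null second fundamental form. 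For the converse, a spacelike $\bar{\s} \subset d\S^{n+2}$ with null $\bar{h}$ admits a canonical null line subbundle of its normal bundle whose integral null geodesics foliate a null hypersurface of $d\S^{n+2}$; projecting $\bar{\s}$ along these geodesics onto a spacelike hypersurface that, up to an isometry of $d\S^{n+2}$, can be placed on $\iota(\S^n)$ with distinguished null direction $(\nu_0,1)$ reconstructs $\bar{\varphi}$ in the stated form.

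For the second assertion, I would substitute $\bar{\varphi}_\tau = (\varphi + \tau\nu,\, \tau)$, verify $\bar{\varphi}_\tau \in d\S^{n+2}$ by the same algebraic identity, and work in a principal frame $\{e_j\}$ of $\varphi$ with $Ae_j = \ka_j e_j$: the identity $d\bar{\varphi}_\tau(e_j) = (1-\tau\ka_j)\, d\varphi(e_j) + d\tau(e_j)(\nu,1)$ shows that $\{f_j = e_j/(1-\tau\ka_j)\}$ is $\bar{\varphi}_\tau$-orthonormal wherever $\tau \ne \ka_j^{-1}$. A direct computation of $D^{\R^{n+3}}_{e_j} d\bar{\varphi}_\tau(e_j)$, isolating the $(\nu,1)$-component of the $d\S^{n+2}$-normal piece, produces a coefficient proportional to $\ka_j/(1-\tau\ka_j)$ in the trace expressed in the $f_j$-frame, and the marginally-trapped condition reduces to $\sum_i m_i\, \ka_i/(1-\tau\ka_i) = 0$, which after clearing denominators is precisely $P(\tau)=0$. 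The converse proceeds as in Theorem~\ref{one}: the null mean curvature vector of a marginally trapped $\bar{\s}$ with non-null $\bar{h}$ selects a preferred null normal direction, and flowing $\bar{\s}$ along the corresponding null geodesics produces a hypersurface in a totally geodesic $\S^{n+1}$ of $d\S^{n+2}$, which up to isometry is the $\varphi$ of the statement. The delicate step I expect is this last trace calculation: the non-constancy of $(\nu,1)$ along $\bar{\varphi}_\tau$ and the position-vector corrections coming from the curvature of $d\S^{n+2}$ each contribute $\tau$-derivative terms, and the content of the assertion is precisely that they cancel so as to leave the algebraic (rather than differential) condition $P(\tau)=0$ on $\tau$, exactly as in the Minkowski case.
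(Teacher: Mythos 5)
Your proposal takes the same route as the paper: both directions reduce to the geometry of the null projection $\varphi=\psi-\tau\nu$ onto the totally geodesic slice $\S^{n+1]}=d\S^{n+2}\cap\{x_{n+3}=0\}$ (resp.\ $\H^{n+1}$), the principal-frame computation gives $\<\vec{H}_{\bar\varphi},\bar\nu\>_1=\frac1n\sum_i \ka_i/(1-\tau\ka_i)$ exactly as in Lemma \ref{geo} of the Minkowski case, and clearing denominators turns the marginally trapped condition into the algebraic equation $P(\tau)=0$, whose $p-1$ roots are then located by the alternating-signs argument. All of this is correct and coincides with what the paper does (the paper simply invokes Lemma \ref{geo} again after setting up the projection).

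The genuine gap is in the converse, at the step you dispose of in one clause (``flowing $\bar{\s}$ along the corresponding null geodesics produces a hypersurface in a totally geodesic $\S^{n+1}$''). What the Legendrian lemmas actually give is that the pair $(\varphi,\nu)$ is an immersion into $\S^{n+1}\times\S^{n+1}$; the projection $\varphi$ alone may fail to be an immersion, for instance when $\bar{\s}$ lies on the focal set of the normal null congruence. In the Minkowski case this is repaired by a vertical translation $\bar\varphi\mapsto\bar\varphi-(0,t_0)$, which replaces $\varphi$ by $\varphi+t_0\nu$ without affecting the marginally trapped condition or the null normal --- and your plan to transplant Theorem \ref{one} ``essentially verbatim'' implicitly leans on this. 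But $d\S^{n+2}$ admits no vertical translation, and this is precisely the one point where the de Sitter proof cannot be copied: the paper instead applies a small hyperbolic rotation $R^{\al}\in SO(n+2,1)$ mixing $x_1$ and $x_{n+3}$, re-normalizes the rotated null normal (whose last component is no longer $1$), and shows, using the Legendrian property together with the fact that the coordinates of $\varphi\in\S^{n+1}$ cannot all vanish, that for arbitrarily small $\al$ the new null projection is an immersion. Without this (or some substitute), your converse does not produce the hypersurface $\varphi$ of $\S^{n+1}$ that the representation formula requires. A much smaller remark: your own displayed second-derivative formula shows that after removing the component along $\bar\varphi$ the normal part is $\big(Hess_\tau(X,Y)+\tau\<X,Y\>_0\big)(\nu_0,1)$, so the coefficient is not literally the Hessian; this does not affect the conclusion that $\bar h$ is collinear with the null vector $(\nu_0,1)$, which is all that marginal trappedness needs.
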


We observe that all the examples found in \cite{CVdV} and \cite{CFG} have null fundamental form (see Section \ref{examples}).

\medskip

The construction works as well  in the case of the Lorentzian product of a space form by the real line. We endow $\S^{n+1} \times \R$  with the Lorentzian metric $ \<.,.\>_0 -dx_{n+3}^2,$ where $\<.,.\>_0$ is the round metric of  $\S^{n+1}$ and $x_{n+3}$ denotes the canonical coordinate of the real line $\R.$ 
\begin{theo} \label{two} 
There is no non-totally geodesic $n$-dimensional submanifold of $\S^{n+1} \times \R$ with null second fundamental form.

\smallskip

Let $\varphi$ be an immersion of class $C^4$  of an $n$-dimensional manifold $\M$ into  $ \S^{n+1}.$ 
 Denote by $ \nu$ the Gauss map of $\varphi$ and by $\ka_1, ...,\ka_p$ its $p$ distinct curvatures with multiplicity $m_i.$ Then the
 polynomial
$$ P(s):=\sum_{i=1}^{p}m_i (\kappa_i s+1)\prod_{j\neq i}^{p}(s-\kappa_j)$$
admits exactly $p-1$ roots $s_i$ if $\varphi$ is minimal and $p$ roots otherwise. Moreover, the $p-1$ or $p$
  immersions $\bar{\varphi}_i: \M \to \S^{n+1} \times \R$ defined by
$$  \bar{\varphi}_i:=\left( \frac{s_i \varphi +\nu}{\sqrt{1+s_i^2}} ,\cot^{-1} s_i \right), \, 1 \leq i \leq p-1 \mbox { or } p,$$
are marginally trapped. %Moreover the immersion  $\bar{\varphi}_0:=(\varphi,0)$ (corresponding to $s = \infty$) is minimal.

Conversely, any $n$-dimensional marginally trapped submanifold of $\S^{n+1} \times \R$ is locally congruent to the image of such an immersion.

\smallskip

In particular, in the $n=2$ case, given a  non minimal $C^4$-immersion $\varphi$ of a surface into $\S^3$ and
$$ a:=\frac{\kappa_1 \kappa_2-1}{\ka_1+\ka_2},$$
the two immersions into $\S^{3} \times \R$ defined by
$$\bar{\varphi}_{\pm}:=\left( \frac{ (a\pm \sqrt{a^2+1})\varphi + \nu}{\sqrt{2}\sqrt{a^2+1 \pm a\sqrt{a^2+1}}} , \cot^{-1} (a \pm \sqrt{a^2+1}) \right)$$
are marginally trapped. 
%Conversely, any  marginally trapped surface of $\S^{3} \times \R$ is locally congruent to the image of such an immersion.

\end{theo}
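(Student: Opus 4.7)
I plan to follow the template of Theorems~\ref{one} and~\ref{unbis}: parametrize a marginally trapped $\bar{\varphi}$ by flowing backward along a null normal geodesic to the slice $\S^{n+1}\times\{0\}$, thereby obtaining a hypersurface $\varphi:\M\to\S^{n+1}$ with Gauss map $\nu$, and then translate the null-mean-curvature condition into an algebraic equation on the flow parameter. Since null geodesics of $\S^{n+1}\times\R$ are $r\mapsto(\cos r\cdot p+\sin r\cdot v,\,r+z_0)$ with $v\in T_p\S^{n+1}$ unit, given $\bar{\varphi}$ and a smooth choice of null normal $\bar{N}_1$, backward flow to $\{x_{n+3}=0\}$ produces $\varphi$, and an inner-product check using $\bar{N}_1\perp d\bar{\varphi}$ shows that the null-geodesic velocity at the slice yields precisely the Gauss map~$\nu$. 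Setting $s:=\cot(t)$ with $t$ the flow time recasts the parametrization as $\bar{\varphi}=\bigl((s\varphi+\nu)/\sqrt{1+s^2},\,\cot^{-1}s\bigr)$.

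\textbf{Second fundamental form and the polynomial $P$.} At a point of $\M$, fix principal directions $e_1,\dots,e_n$ of $\varphi$ with principal curvatures $\ka_j$. Differentiating $\bar{\varphi}$ in the ambient $\R^{n+2}\times\R$, using $\partial_{e_j}\nu=-\ka_j e_j$, yields a closed form for $d\bar{\varphi}(e_j)$; a short computation shows the induced metric is diagonal in this frame with $\bar{g}(e_j,e_j)=(s-\ka_j)^2/(1+s^2)$, so $\bar{\varphi}$ is an immersion precisely when $s\ne\ka_j$ for every~$j$---automatic at roots of $P$ since $P(\ka_k)=m_k(\ka_k^2+1)\prod_{j\ne k}(\ka_k-\ka_j)$ is nonzero. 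Projecting $D_{e_i}d\bar{\varphi}(e_j)$ onto the null normal frame $(\bar{N}_1,\bar{N}_2)$ attached to the construction and summing over $j$, the $\bar{N}_2$-component of the mean curvature $\bar{H}$ simplifies, after clearing a nonvanishing factor, to exactly $P(s)$; hence $\bar{\varphi}$ is marginally trapped iff $P(s)=0$.

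\textbf{Root count, converse, and Part~1.} The leading coefficient of $P$ equals $\sum_i m_i\ka_i=nH$, so $\deg P=p$ if $\varphi$ is non-minimal and $p-1$ otherwise; since $P(\ka_k)$ alternates in sign along an ordering of the $\ka_k$, the intermediate value theorem yields $p-1$ distinct real roots, distinct from every $\ka_j$, and when $H\ne 0$ a sign comparison at $\pm\infty$ supplies the remaining root. For the converse, the null mean curvature of a marginally trapped $\bar{\varphi}$ supplies the null normal in the parametrization, and the resulting $s$ satisfies $P(s)=0$. For Part~1, assume $\bar{h}=\sigma\otimes\bar{N}_1$ for null $\bar{N}_1=(\bar{v},1)$; the Weingarten relation $g(\bar{h}(X,Y),\bar{N}_1)=-g(D_X\bar{N}_1,Y)=0$ forces $D_X\bar{N}_1$ to be normal for every tangent~$X$, and decomposing in the product structure gives $\bar{v}$ parallel along the spatial projection of the submanifold, which is then a totally geodesic great~$\S^n$. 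Consequently the submanifold sits in $\S^n\times\R$, a totally geodesic hypersurface of $\S^{n+1}\times\R$, and its ambient second fundamental form coincides with its second fundamental form inside $\S^n\times\R$, valued in the timelike normal direction there; being null forces $\bar{h}\equiv 0$. The $n=2$ corollary is the quadratic $(\ka_1+\ka_2)s^2-2(\ka_1\ka_2-1)s-(\ka_1+\ka_2)=0$ solved as $s_{\pm}=a\pm\sqrt{a^2+1}$.

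\textbf{Main obstacle.} The principal difficulty is the bookkeeping of the second step: differentiating $d\bar{\varphi}$, projecting onto the normal bundle, and reading off clean expressions in the null frame so that $P$ emerges intact. The remaining steps are essentially formal, and the Part~1 non-existence reduces to the trivial fact that a nonzero timelike vector cannot be null.
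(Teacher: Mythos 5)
Your treatment of the second, third and fourth assertions is essentially the paper's own argument: the backward null flow to the slice is exactly the paper's null projection $\varphi=\cos(\tau)\psi-\sin(\tau)\nu$ with its Legendrian lift $(\varphi,\nu_\varphi)$; your induced metric $\bar g(e_j,e_j)=(s-\ka_j)^2/(1+s^2)$ is the paper's $\bar g=\cos^2(\tau)g-2\sin(\tau)\cos(\tau)g(A.,.)+\sin^2(\tau)g(A.,A.)$ in disguise; the marginally trapped condition $\langle\vec{H},\bar\nu\rangle_1=\frac{1}{n}\sum_i\frac{\ka_i+\tan\tau}{1-\tan(\tau)\ka_i}=0$ becomes $P(\cot\tau)=0$; and the root count via the alternating signs of $P(\ka_k)$ together with the leading coefficient $nH$ is also the paper's. (You should still record, as the paper does in a lemma, that after a vertical translation the null projection may be assumed to be an immersion.)

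The genuine gap is in your proof of the first assertion. From $\langle\bar h(X,Y),\bar N_1\rangle=0$ you correctly get $D_X\bar N_1\in N\M\cap\bar N_1^{\perp}=\R\bar N_1$, and the vanishing $\R$-component of $D_X\bar N_1$ then gives $D^{\S}_X\bar v=0$. But ``$\bar v$ parallel along $\psi$'' does not by itself imply that $\psi$ lies in a totally geodesic great $\S^n$: parallel fields along a map into $\S^{n+1}$ exist along any curve, and what you actually need is that $\bar v$ is a \emph{constant} vector of $\R^{n+2}$. Concretely, since normality of $\bar N_1$ gives $\langle d\psi(X),\bar v\rangle_0=d\tau(X)$, the condition $D^{\S}_X\bar v=0$ reads $d\bar v=-d\tau\,\psi$, so $\bar v$ is constant only if $d\tau=0$, which you never establish; without it the submanifold need not sit in any $\S^n\times\R$ and your final ``timelike cannot be null'' step has nothing to apply to. The gap can be closed: flatness of the pulled-back connection forces $0=R^{\S}(d\psi X,d\psi Y)\bar v=d\tau(Y)\,d\psi(X)-d\tau(X)\,d\psi(Y)$, and if $d\tau\neq 0$ somewhere then choosing $0\neq Y\in\ker d\tau$ (possible for $n\geq 2$) yields $d\psi(Y)=0$, hence $d\bar\varphi(Y)=0$, contradicting that $\bar\varphi$ is an immersion. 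The paper instead reads the first assertion directly off its lemma: $\bar h_{\bar\nu}=0$ forces every principal curvature of the null projection to solve a quadratic whose two roots are $\cot\tau$ (excluded by non-degeneracy of $\bar g$) and $-\tan\tau$, so $\varphi$ is umbilic with $A=-\tan(\tau)\,Id$, whence $\tau$ is constant, $\psi$ is totally geodesic and $\bar\varphi$ is totally geodesic.
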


Analogously, $\H^{n+1} \times \R$ is endowed with the metric $ \<.,.\>_1 -dx_{n+3}^2,$ where $\<.,.\>_1$ is the standard metric of  $\H^{n+1}$ and  $x_{n+3}$ denotes the canonical coordinate of the real line $\R.$

\begin{theo} \label{twobis} 
There is no non-totally geodesic $n$-dimensional submanifold of $\H^{n+1} \times \R$ with null second fundamental form.

\smallskip

Let $\varphi$ be an immersion of class $C^4$  of an $n$-dimensional manifold $\M$ into  $ \H^{n+1}.$ 
 Denote by $ \nu$ the Gauss map of $\varphi$ and by $\ka_1, ...,\ka_p$ its $p$ distinct curvatures with multiplicity $m_i.$ 
Denote by $s_i$, $1 \leq i \leq q \leq p,$ the $q$ roots of the polynomial
$$ P(s):=\sum_{i=1}^{p}m_i (  \ka_i s -1)\prod_{j\neq i}^{p}(s-\kappa_j )$$
satisfying $|s_i| >1.$
Then the $q$
  immersions $\bar{\varphi}_i: \M \to \H^{n+1} \times \R$ defined by
$$  \bar{\varphi}_i:=\left( \frac{s_i \varphi +\nu}{\sqrt{s_i^2-1}} ,\coth^{-1} s_i \right), \, 1 \leq i \leq q,$$
are marginally trapped. 

Conversely, any $n$-dimensional marginally trapped submanifold of $\H^{n+1} \times \R$ is locally congruent to the image of such an immersion.

\smallskip

In particular, in the $n=2$ case, given a  non minimal $C^4$-immersion $\varphi$ of a surface into $\H^3,$
such that $a:=\frac{\ka_1 \ka_2 +1}{\ka_1 +\ka_2} \in (1,\infty),$
the immersion into $\H^{3} \times \R$ defined by
$$\bar{\varphi}:=\left( \frac{ (a +  \sqrt{a^2-1})\varphi + \nu}{\sqrt{2}\sqrt{a^2 -1 + a\sqrt{a^2-1}}} , \frac{1}{2}\coth^{-1} (a) \right)$$
is marginally trapped.

\end{theo}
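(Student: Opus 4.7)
The proof follows the blueprint of Theorem \ref{two}, replacing spherical conventions with their hyperbolic analogues: $\sqrt{s^2+1}$ becomes $\sqrt{s^2-1}$, $\cot^{-1}$ becomes $\coth^{-1}$, and the condition $|s|>1$ ensures both $\tfrac{s\varphi+\nu}{\sqrt{s^2-1}}\in\H^{n+1}$ and $\coth^{-1}s\in\R$.

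For the direct statement, the plan is to take smooth $s:\M\to\R$ with $|s|>1$, set $\bar\varphi_s:=\bigl(\tfrac{s\varphi+\nu}{\sqrt{s^2-1}},\coth^{-1}s\bigr)$, and verify this is a spacelike immersion of $\M$ into $\H^{n+1}\times\R$ using $\<\varphi,\varphi\>_1=-1$, $\<\varphi,\nu\>_1=0$, $\<\nu,\nu\>_1=1$, and $d\nu(X)=-A_\nu X$. Direct computation gives
$$d\bar\varphi_s(X)=\left(\tfrac{(sI-A_\nu)X}{\sqrt{s^2-1}}-\tfrac{\sigma_X(\varphi+s\nu)}{(s^2-1)^{3/2}},\,-\tfrac{\sigma_X}{s^2-1}\right),\quad \sigma_X:=ds(X),$$
and after cancellation the induced metric is $g(X,Y)=\tfrac{1}{s^2-1}g_0((sI-A_\nu)X,(sI-A_\nu)Y)$, where $g_0$ denotes the Riemannian metric on $\M$ induced by $\varphi$. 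The null vector field $\bar N_+:=\bigl(\tfrac{\varphi+s\nu}{\sqrt{s^2-1}},1\bigr)$ is a normal section (both tangency and orthogonality reduce to telescoping of the $\sigma_X$-terms). Using the Gauss formula $D^\H_X V=\tilde D_X V-\<V,X\>_1\,p$ for $\H^{n+1}\subset\R^{n+2}_1$, a clean cancellation yields $D_X\bar N_+=\bigl(\tfrac{(I-sA_\nu)X}{\sqrt{s^2-1}},0\bigr)$, whence $\<\bar h(X,Y),\bar N_+\>=-\tfrac{1}{s^2-1}g_0((I-sA_\nu)X,(sI-A_\nu)Y)$. Diagonalizing $A_\nu$ in a $g_0$-orthonormal basis of principal directions with eigenvalues $\ka_i$ of multiplicity $m_i$ (which is $g$-orthogonal with eigenvalues $(s-\ka_i)^2/(s^2-1)$) and using the identity $(1-s\ka)(s-\ka)=-(s-\ka)(\ka s-1)$, the trace collapses to
$$\<\vec H,\bar N_+\>=\sum_{i=1}^p m_i\,\tfrac{\ka_is-1}{s-\ka_i}.$$
Clearing denominators shows that marginal trapping is equivalent to $P(s)=0$.

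For the converse, one starts from a marginally trapped $\bar\s\subset\H^{n+1}\times\R$, extracts the null line $\R\bar N$ in the normal bundle along which $\vec H$ lies, and flows $\bar\s$ along the ambient null geodesics with velocity $\bar N$. A null geodesic in $\H^{n+1}\times\R$ through $(p,t_0)$ with velocity $(\hat v,1)$ ($\hat v$ spacelike unit in $T_p\H^{n+1}$) is $(\cosh r\,p+\sinh r\,\hat v,\,t_0+r)$; flowing back to $t=0$ produces the hypersurface $\varphi:\M\to\H^{n+1}$ with Gauss map $\nu$, and $s=\coth r$ records the elapsed time. Reversing the direct computation identifies $\bar\s$ with $\bar\varphi_s(\M)$. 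The first assertion (no non-totally geodesic submanifold with null second fundamental form) is proved analogously to the spherical case: writing $\bar h=\phi\,\bar N$ with $\bar N$ null normal, the Codazzi and Ricci equations combined with the curvature of $\H^{n+1}\times\R$---non-trivial on $\H^{n+1}$-planes but vanishing on planes mixing $\H^{n+1}$- and $\R$-directions---force $\phi\equiv 0$.

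The principal technical obstacle is the bookkeeping in the shape operator computation: verifying the $\sigma_X$-cancellations in both the induced metric and in $\<\bar h,\bar N_+\>$. Once these are in hand the polynomial identity is elementary algebra. For $n=2$, $P(s)=(\ka_1+\ka_2)s^2-2(\ka_1\ka_2+1)s+(\ka_1+\ka_2)$ has roots $s_\pm=a\pm\sqrt{a^2-1}$ with $s_+s_-=1$; only $s_+$ exceeds $1$ in absolute value when $a>1$, and the identity $\coth^{-1}(a+\sqrt{a^2-1})=\tfrac12\coth^{-1}a$ (equivalent to $\tfrac{s_+^2+1}{2s_+}=a$) recovers the stated formula.
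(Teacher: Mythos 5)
Your treatment of the representation formula itself is correct and follows the same route as the paper: project along the null geodesics $(\cosh r\,p+\sinh r\,\hat v,\,t_0+r)$ to the slice $t=0$, compute $\bar g$ and $\langle\bar h,\bar N_+\rangle$ in a principal frame of $\varphi$, and reduce marginal trapping to $P(s)=0$. Your formulas for $d\bar\varphi_s$, the induced metric, $D_X\bar N_+$ and the resulting sum $\sum_i m_i(\kappa_i s-1)/(s-\kappa_i)$ all check out (the paper obtains the same sum as $\sum_i m_i(\kappa_i-\tanh\tau)/(1-\tanh(\tau)\kappa_i)$ with $s=\coth\tau$), and the $n=2$ reduction, including the identity $\coth^{-1}(a+\sqrt{a^2-1})=\tfrac12\coth^{-1}a$ and the discarding of the root $s_-=1/s_+$, is right.

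The genuine gap is the first assertion of the theorem. You dispose of it by asserting that ``the Codazzi and Ricci equations combined with the curvature of $\H^{n+1}\times\R$ force $\phi\equiv0$,'' but you neither carry this out nor is it the mechanism at work: with $\bar h=\phi\,\bar N$ one has $A_{\bar N}=0$, hence $[A_{\bar N},A_{\bar N'}]=0$, and the Ricci equation gives no immediate constraint on $\phi$; it is likewise unclear how Codazzi alone kills $\phi$. The actual argument is elementary and is already contained in your own computation: if $\bar h$ takes values in the null line spanned by $\bar N_+$, then $\langle\bar h,\bar N_+\rangle=0$, and your formula $\langle\bar h(X,Y),\bar N_+\rangle=-\tfrac{1}{s^2-1}g_0((I-sA)X,(sI-A)Y)$ forces $(1-s\kappa_i)(s-\kappa_i)=0$ for every $i$; non-degeneracy of $\bar g$ excludes $s=\kappa_i$, so $A=s^{-1}\mathrm{Id}=\tanh(\tau)\,\mathrm{Id}$. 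A totally umbilic hypersurface of $\H^{n+1}$ has constant principal curvature, so $\tau$ is constant, the parallel hypersurface at distance $\tau$ is totally geodesic, and $\bar\varphi$ is totally geodesic --- this is exactly the paper's argument, written out for the spherical case and invoked by analogy here. A secondary, smaller omission sits in your converse: before speaking of the principal curvatures of $\varphi$ you must justify that the null projection is an immersion; the paper needs a lemma showing this can be arranged after a vertical translation, and your ``reversing the direct computation'' silently assumes it.
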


%%%%%%%%%%%%%%%%%%%%%%%%%%%%%%%%%%%%%%%%%%%%%%%%%%%%%%%%%%%%%%%%%%%%%%%%%%%%%%%%%%%%%%%%%%%%%%%%%%%%%%%%%%%%%%%%%%%%%%%

\section{The Minkowski case: proof of Theorem \ref{one}} \label{mink}

Let $\bar{\varphi}=(\psi,\tau)$ be an immersion of a $n$-dimensional manifold $\M$ into $ \R^{n+2}_1 $ which is spacelike, i.e.\ the induced metric
$\bar{g}:=\bar{\varphi}^*\<.,.\>_1$ is definite positive.  In particular the induced metric on the normal space of  $\bar{\varphi}$ is Lorentzian and we may define locally two null, non-vanishing normal vector fields. Moreover a  null  vector field $\bar{\nu}$ may be normalized on the following form:
$ \bar{\nu}=(\nu,1),$ with $\nu :  \M  \to \S^{n}.$ 
From now on we consider a null normal vector field $\bar{\nu}:=(\nu,1)$ 
 and we set   $\varphi:=\psi - \tau \nu.$

\begin{lemm} \label{l1mink}
 The map $(\varphi, \nu) : \M \to \R^{n+1} \times \S^n$ is an immersion. 
\end{lemm}

\begin{proof}
Suppose $(\varphi,\nu)$ is not an immersion, so that there exists a non-vanishing vector $v \in T\M$ such that $(d\varphi(v),d\nu(v))=(0,0).$
Since we have $d\psi=d\varphi + \tau d\nu + d\tau \nu,$
it follows that 
$$d \bar{\varphi}(v) =(d\psi(v), d\tau(v))= (d\tau(v) \nu , d\tau(v))= d\tau(v) \bar{\nu},$$
which is a null vector. This contradicts the assumption that $\bar{\varphi}$ is spacelike.
\end{proof}

\begin{lemm} \label{l2} We have the following relation\footnote{This corresponds to the fact that the immersion $(\varphi,\nu)$ is \em Legendrian \em with respect to the canonical contact structure of the unit bundle of $\R^{n+1}$.}:
$$\<d\varphi, \nu\>_0=0.$$
\end{lemm}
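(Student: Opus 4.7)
The plan is to prove this by a direct computation using two ingredients: the definition $\varphi=\psi-\tau\nu$, and the fact that $\bar\nu=(\nu,1)$ is a normal vector field to $\bar\varphi=(\psi,\tau)$ with respect to $\langle\cdot,\cdot\rangle_1$.

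First I would differentiate $\varphi=\psi-\tau\nu$ to obtain
\[
d\varphi = d\psi - d\tau\,\nu - \tau\,d\nu,
\]
and then take the $\langle\cdot,\cdot\rangle_0$-inner product with $\nu$. Since $\nu$ is $\S^{n}$-valued we have $\langle\nu,\nu\rangle_0=1$, so differentiating yields $\langle d\nu,\nu\rangle_0=0$, and the expression collapses to
\[
\langle d\varphi,\nu\rangle_0 = \langle d\psi,\nu\rangle_0 - d\tau.
\]

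The remaining step is to show $\langle d\psi,\nu\rangle_0 = d\tau$. For this I would use the normality condition: for every tangent vector $X$ of $\M$,
\[
0 = \langle d\bar\varphi(X),\bar\nu\rangle_1 = \bigl\langle (d\psi(X),d\tau(X)),(\nu,1)\bigr\rangle_1 = \langle d\psi(X),\nu\rangle_0 - d\tau(X),
\]
where in the last equality I used the definition $\langle\cdot,\cdot\rangle_1 = \langle\cdot,\cdot\rangle_0 - dx_{n+2}^2$. Combining the two displayed identities gives $\langle d\varphi,\nu\rangle_0=0$.

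There is no real obstacle here: the proof is a two-line unwinding of the definitions. The only thing worth flagging is the sign bookkeeping in the Lorentzian inner product, which is precisely what turns the normality of $\bar\nu$ into the relation $\langle d\psi,\nu\rangle_0=d\tau$ that is needed to cancel the $-d\tau$ term. Conceptually this is the Legendrian condition mentioned in the footnote: the hypersurface $\varphi$ of $\R^{n+1}$ carries $\nu$ as a genuine unit normal, which is exactly the geometric content of the lemma.
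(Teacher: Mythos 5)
Your proof is correct and is essentially identical to the paper's: both differentiate $\varphi=\psi-\tau\nu$, use $\<\nu,d\nu\>_0=0$, and convert the normality condition $\<d\bar\varphi,\bar\nu\>_1=0$ into $\<d\psi,\nu\>_0=d\tau$ via the signature of $\<.,.\>_1$. Nothing to add.
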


\begin{proof} Using again that $d\psi= d\varphi + \tau d\nu + d\tau \nu$ and observing 
 that $\<\nu, d\nu\>_0=0,$ we have 
$$ 0 =  \<d\bar{\varphi},\bar{\nu}\>_1 = \<(d\psi, d\tau),(\nu, 1)\>_1= \<d\psi , \nu \>_0 - d\tau= \<d\varphi, \nu\>_0$$

%$$ \<d\varphi, \nu\>_0 = \<(d\psi, d\tau),(\nu, 1)\>_1= 
%\<d\psi , \nu \>_0 - d\tau = \<(d\psi,d\tau),(\nu,1)\>_1=\<d\bar{\varphi},\bar{\nu}\>_1=0.$$
\end{proof}

 \begin{lemm}  \label{l3mink}
Given $x \in \M$ and $ \eps >0,$ there exists a neighbourhood $U$ of $x$ and $t_0 \in (-\eps, \eps)$ such that $\varphi + t_0 \nu$ is an immersion of $U$, and $\nu\big|_U$ is its Gauss map.
  \end{lemm}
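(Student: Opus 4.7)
Fix a point $p \in \M$. By Lemma~\ref{l2} we have $\<d\varphi, \nu\>_0 = 0$, and differentiating $|\nu|_0 = 1$ gives $\<d\nu, \nu\>_0 = 0$. Hence both $(d\varphi)_p$ and $(d\nu)_p$ take values in the $n$-dimensional subspace $\nu(p)^\perp \subset \R^{n+1}$. Choose bases of $T_p\M$ and of $\nu(p)^\perp$ and set
$$f(t) := \det\!\big((d\varphi)_p + t(d\nu)_p\big),$$
a polynomial in $t$ of degree at most $n$. By construction, $\varphi + t_0 \nu$ is an immersion at $p$ precisely when $f(t_0) \neq 0$; and at every point one has $\<d(\varphi + t_0 \nu), \nu\>_0 = 0$, so the unit vector $\nu$ will automatically serve as the Gauss map wherever $\varphi + t_0\nu$ is immersive.

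Once $f$ is known to be a nonzero polynomial, it has at most $n$ real roots, so any interval $(-\eps,\eps)$ contains a value $t_0$ with $f(t_0) \neq 0$; by continuity $\varphi + t_0\nu$ is then an immersion on a neighbourhood of $p$. The entire content of the lemma is therefore concentrated in the claim $f \not\equiv 0$.

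To establish that claim I would evaluate $f$ at the distinguished value $t = \tau(p)$. Differentiating $\psi = \varphi + \tau\nu$ gives $d\psi = d\varphi + \tau\,d\nu + d\tau\cdot\nu$, and the two orthogonality relations above combined with $|\nu|_0^2 = 1$ yield, for every $v \in T_p\M$,
$$|d\psi(v)|_0^2 - \big(d\tau(v)\big)^2 \;=\; \big|d\varphi(v) + \tau(p)\, d\nu(v)\big|_0^2.$$
The left-hand side equals $\bar g\big(d\bar\varphi(v), d\bar\varphi(v)\big)$, which is strictly positive on nonzero $v$ because $\bar\varphi$ is spacelike. Hence $(d\varphi)_p + \tau(p)(d\nu)_p$ has trivial kernel, so $f(\tau(p)) \neq 0$ and $f \not\equiv 0$. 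The only substantive obstacle in the proof is this last observation: recognising that the spacelike hypothesis on $\bar\varphi$ is precisely the injectivity of the one-parameter family $(d\varphi)_p + t(d\nu)_p$ at the particular value $t = \tau(p)$, which supplies the missing witness preventing $f$ from vanishing identically.
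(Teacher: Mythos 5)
Your proof is correct, and it takes a genuinely different --- and in one respect more complete --- route than the paper's. The paper argues that for distinct $t\neq t'$ the kernels of $d\varphi+t\,d\nu$ and $d\varphi+t'\,d\nu$ intersect trivially (else $(\varphi,\nu)$ would fail to be an immersion) and concludes directly that at most $n$ values of $t$ are bad; it never introduces the determinant polynomial nor the witness $t=\tau(p)$. Note, however, that pairwise trivial intersection of nontrivial subspaces does not by itself bound their number (already in $\R^2$ infinitely many distinct lines through the origin pairwise meet only at $0$; the pencil $A+tB$ with $A=\big(\begin{smallmatrix}1&0\\0&0\end{smallmatrix}\big)$, $B=\big(\begin{smallmatrix}0&1\\0&0\end{smallmatrix}\big)$ is jointly injective, has pairwise transversal kernels, yet is singular for every $t$). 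What actually closes the argument is exactly your observation: the polynomial $f(t)=\det\big((d\varphi)_p+t(d\nu)_p\big)$, well defined because both differentials land in the $n$-dimensional space $\nu(p)^\perp$, has degree at most $n$ and is not identically zero because the spacelike hypothesis forces $f(\tau(p))\neq 0$ via the identity $\bar g(v,v)=\big|d\varphi(v)+\tau\,d\nu(v)\big|_0^2$. This identity is also implicit in the paper's Lemma \ref{geo} (where nondegeneracy of $\bar g$ is shown to mean that $\tau^{-1}$ avoids the principal curvatures), so your argument is consistent with, and supplies the missing linear-algebra step of, the paper's proof. The identification of $\nu$ as the Gauss map via Lemma \ref{l2} is the same in both treatments, and your remark that $t_0$ works on a neighbourhood of $p$ correctly matches the local scope of the statement.
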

 
\begin{proof}

The claim follows from the fact that, $\forall x \in \M$,  the set
$$ \{ t \in \R | \,  \, d\varphi_x + t \, d\nu_x \mbox {  has not maximal rank} \}$$
 contains at most $n$ elements.
To see this, observe that given a pair of distinct real numbers $(t,t')$, we have
$$Ker (d\varphi_x + t \, d \nu_x) \cap Ker  (d\varphi_x + t' \, d \nu_x) = \{ 0\}$$ 
(otherwise we would have a contradiction with the fact that $(\varphi,\nu)$ is an immersion).
Hence there cannot be more than $n$ distinct values $t$ such that $Ker (d\varphi_x + t \,  d \nu_x) \neq \{ 0\}.$ Moreover such real numbers $t$ depend continuously on the point $x \in \M$, so we may choose a  neighbourhood $U$ of $x$ such that
 $ \{ t \in \R \, | \,  \,    \varphi +  t \nu \mbox { is  an immersion of } \, U \}$ contains a neighbourhood of $0$, which implies the first part of the claim.

The fact that $\nu$ is the Gauss map of $\varphi + t_0 \nu$ comes from Lemma \ref{l2}:
$$\<d(\varphi + t_0 \nu),\nu\>_0=\< d\varphi,\nu\>_0 + t_0\<d \nu,\nu\>_0=0.$$
\end{proof}

 Since the whole discussion is local,  Lemma \ref{l3mink} shows that there is no loss of generality in assuming that $\varphi$ is an immersion: if it is not the case, we may
 translate the immersion $\bar{\varphi}$ along the vertical direction, setting $\bar{\varphi}_{t_0}:=\bar{\varphi} - (0,t_0).$
Of course $\bar{\varphi}$ is marginally trapped if and only if $\bar{\varphi}_{t_0}$ is so, and moreover
the vector field $\bar{\nu}$ is still normal to $\bar{\varphi}_{t_0}.$
Finally, observe that the map $\varphi_{t_0}: \M \to \R^{n+1}$ associated to $\bar{\varphi}_{t_0}$ is 
$$\varphi_{t_0} =\psi - (\tau - {t_0}) \nu = \psi - \tau \nu + t_0 \nu = \varphi + t_0 \nu,$$
hence an immersion.

We now describe the first fundamental form of $\bar{\varphi}$ and its second fundamental form with respect to $\bar{ \nu}$, both in terms of the
geometry of the immersion $\varphi$:

\begin{lemm} \label{geo}
Denote by $g:=\varphi^* \<.,.\>_0$ the metric induced on $\M$ by $\varphi$ and $A$ the shape operator associated to $ \nu,$ 
i.e.\ $A (v):=-d\nu(v), \, \forall v \in T\M.$
Then the metric 
$\bar{g}:=\bar{\varphi}^\ast \<.,.\>_1$ induced on  $\M$ by $\bar{\varphi}$ is given by the formula
$$\bar{g}=g(.,.)-2 \tau g(A.,.)+ \tau^2 g(A.,A.).$$%=g((Id-\tau A).,(Id- \tau A).).$$
In particular, the non-degeneracy assumption on $\bar{g}$ implies that $\tau^{-1}$ is not equal to any principal curvature of $\varphi$.
Moreover,  the second fundamental form of $\bar{\varphi}$ with respect to $\bar{ \nu}$ is given by
$$\bar{h}_{\bar{ \nu}}:= \<\bar{h}(.,.), \bar{\nu}\>_1 = g(.,A.)- \tau  g(A.,A.).$$%=g((Id- \tau A).,A.)$$
and 
\begin{equation} \label{H} \<\vec{H}_{\bar{\varphi}},\bar{ \nu}\>_1 =  
 \frac{1}{n} \sum_{i=1}^{n} \frac{\ka_i}{(1-\tau \ka_i)}, \end{equation} 
where the $\ka_i$ are the principal curvatures of $\varphi.$ \end{lemm}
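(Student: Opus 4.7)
The plan is to work in the decomposition $\bar\varphi = (\varphi + \tau\nu,\tau)$ and exploit the two orthogonality relations $\<d\varphi,\nu\>_0 = 0$ (Lemma \ref{l2}) and $\<\nu,d\nu\>_0 = 0$ (from $\<\nu,\nu\>_0=1$) to kill the cross terms in every computation. First I would differentiate, obtaining $d\bar\varphi(X) = (d\varphi(X) + \tau\, d\nu(X) + d\tau(X)\,\nu,\, d\tau(X))$, and then pair $d\bar\varphi(X)$ with $d\bar\varphi(Y)$ under $\<.,.\>_1$. The cross terms involving $d\tau(X)\,\nu$ or $d\tau(Y)\,\nu$ paired against $d\varphi$ or $d\nu$ vanish by the two relations above, while the pairing of $d\tau(X)\,\nu$ with $d\tau(Y)\,\nu$ produces $d\tau(X)\,d\tau(Y)$, which cancels the temporal contribution $-d\tau(X)\,d\tau(Y)$. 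Rewriting the surviving terms via $d\nu = -d\varphi\circ A$ and using the symmetry of $A$ then yields $\bar g = g - 2\tau\, g(A\cdot,\cdot) + \tau^2 g(A\cdot,A\cdot)$.

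Next, I would pass to a local $g$-orthonormal principal frame $\{X_i\}$ with $AX_i=\ka_i X_i$. In that frame the formula for $\bar g$ diagonalizes as $\bar g(X_i,X_j) = (1-\tau\ka_i)^2\,\delta_{ij}$, so $\bar g$ is non-degenerate precisely when $1-\tau\ka_i \neq 0$ for every $i$, i.e., when $\tau^{-1}$ is not a principal curvature of $\varphi$. The same frame diagonalizes $\bar h_{\bar\nu}$ in the last step below.

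For the second fundamental form I invoke the identity $\<\bar h(X,Y),\bar\nu\>_1 = -\<D_X\bar\nu,\, d\bar\varphi(Y)\>_1$ recalled in the introduction. Because $\bar\nu=(\nu,1)$ has a constant last component, $D_X\bar\nu=(d\nu(X),0)$, and in the Lorentzian pairing with $d\bar\varphi(Y)$ the contribution $d\tau(Y)\<d\nu(X),\nu\>_0$ again vanishes, leaving
$$\bar h_{\bar\nu}(X,Y) = -\<d\nu(X),d\varphi(Y)\>_0 - \tau\,\<d\nu(X),d\nu(Y)\>_0 = g(AX,Y) - \tau\, g(AX,AY),$$
as claimed. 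Evaluating in the principal frame gives $\bar h_{\bar\nu}(X_i,X_i) = \ka_i(1-\tau\ka_i)$; dividing by $\bar g(X_i,X_i) = (1-\tau\ka_i)^2$ and summing produces the $\bar g$-trace $\sum_i \ka_i/(1-\tau\ka_i)$, and a final division by $n$ yields Formula (\ref{H}). There is essentially no obstacle beyond systematic bookkeeping: the only care required is to invoke Lemma \ref{l2} and $\<\nu,d\nu\>_0=0$ at every stage, so that all terms involving $\nu$ or $d\tau$ collapse cleanly and the identification $d\nu=-d\varphi\circ A$ is applied consistently.
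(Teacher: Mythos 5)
Your proposal is correct and follows essentially the same route as the paper: expand $d\bar\varphi$, use $\<d\varphi,\nu\>_0=\<\nu,d\nu\>_0=0$ to kill cross terms and cancel the temporal contribution, compute $\bar h_{\bar\nu}=-\<d\bar\varphi,d\bar\nu\>_1$ (your Weingarten phrasing is the same computation), and take the $\bar g$-trace in a principal orthonormal frame where $\bar g(e_i,e_i)=(1-\tau\ka_i)^2$ and $\bar h_{\bar\nu}(e_i,e_i)=\ka_i(1-\tau\ka_i)$.
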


\begin{proof}

Since $\<d\varphi, \nu\>_0=\<d \nu, \nu\>_0=0$, we have, given $v_1,v_2 \in T_x\M^n,$
\begin{eqnarray*}
\bar{g}(v_1,v_2)&=&\<d\bar{\varphi}(v_1),d\bar{\varphi}(v_2)\>_1\\
&=&\<d\varphi(v_1),d\varphi(v_2)\>_0 +\tau \<d\varphi(v_1),d \nu(v_2)\>_0+ \tau \<d \nu(v_1),d\varphi(v_2)\>_0 \\
&&+\tau^2 \<d \nu(v_1),d \nu(v_2)\>_0
+ d \tau (v_1)d \tau (v_2)\< \nu, \nu\>_0- d \tau (v_1)d \tau (v_2)\\
%&=&\<d\varphi(v_1),d\varphi(v_2)\>_0 +\tau \<d\varphi(v_1),d \nu(v_2)\>_0+ \tau\<d \nu(v_1),d\varphi(v_2)\>_0+\tau^2 \<d \nu(v_1),d \nu(v_2)\>_0\\
&=&g(v_1,v_2)-\tau (g(v_1,Av_2)+g(Av_1,v_2))+ \tau^2 g(Av_1,Av_2)\\
&=& g(v_1,v_2)-2 \tau g(Av_1,v_2)+ \tau^2 g(Av_1,Av_2).
\end{eqnarray*}
We calculate the second fundamental form of $\bar{\varphi}$ with respect to $\bar{ \nu}:=( \nu,1)$:
\begin{eqnarray*} \bar{h}_{\bar{ \nu}}&=& -\< d\bar{\varphi},d\bar{ \nu}\>_1 \\ &=&-\<d\varphi + \tau  d \nu + d\tau  \nu,d \nu\>_0\\ 
&=&-\<d\varphi,d \nu\>_0 - \tau  \<d \nu ,d \nu\>_0\\ &=& g(.,A.) - \tau g(A.,A.).
\end{eqnarray*}
To complete the proof,  observe that in the totally umbilic case $A=\ka Id$, we obviously have
$$\<\vec{H}_{\bar{\varphi}},\bar{ \nu}\>_1=\frac{\ka}{1-\tau \ka}.$$
If $\varphi$ is not totally umbilic, we introduce, away from isolated umbilic points, 
a
principal orthonormal frame $(e_1, ... , e_n)$ along $\M,$ 
i.e.\ such that $g(e_i,e_j)= \delta_{ij}$ and $ Ae_i = \kappa_i e_i.$ Hence
\begin{eqnarray*} \bar{g}(e_i,e_j)&=& (1 - 2\tau  \ka_i+ \tau^2 \ka_i^2) \delta_{ij} \\
%&=& (1- \tau \ka_i)^2 \delta_{ij} 
\bar{h}(e_i,e_j)&=&\ka_i (1 - \tau \ka_i)\delta_{ij}  
\end{eqnarray*} 
and the proof follows.

\end{proof}

We are now in position to complete the proof of Theorem \ref{one}. We first assume that  $\varphi$ is totally geodesic, i.e.\ $A$ vanishes. This is locally equivalent to assume that $\nu$ is constant, and without loss of generality, we may assume that $\nu=\nu_0:=(0,...,0,1).$

From Equation \ref{H} it is immediately seen  $\bar{h}_{\bar{\nu}}$ vanishes,
so the second fundamental form $\bar{h}$ of $\bar{\varphi}$ takes value in the null line directed by $\bar{\nu}.$ 
It is then straightforward to check that $\bar{h} (X,Y)= Hess_{\tau}(X,Y) (\nu,1).$
 We therefore recover the
first part of Theorem \ref{one}. 

In order to complete the proof we order the non-vanishing  principal curvatures  $\ka_i$, taking into account their  
multiplicity $m_i$, in such a way that  the corresponding radii of curvature   are increasing
$r_1 := \ka_1^ {-1} < ... < r_p:=\ka^{-1}_p$. Hence
%$\vec{H}_{\bar{\varphi}}$ is collinear to $\bar{ \nu}$ if and only if
\begin{align*}
  & \<\vec{H}_{\bar{\varphi}},\bar{ \nu}\>_1=0\\
 \Longleftrightarrow & \sum_{i=1}^{p} \frac{m_i \ka_i}{(1- \tau \ka_i)}=0\\
 \Longleftrightarrow  & \sum_{i=1}^{p} \frac{m_i}{r_i- \tau}=0 \\
 \Longleftrightarrow &P(\tau):=\sum_{i=1}^{p} m_i \prod_{j \neq i}^{p}(r_j-\tau)=0.
\end{align*}
 We have
$$ P (r_i)= \sum_{k=1}^{p-1} m_k \prod_{j \neq k}^{p-1}(r_j-r_i)=m_i \prod_{j \neq i}^{p-1}(r_j-r_i) .$$
It follows that $P(r_p) > 0,$ $P(r_{p-1}) <0$ and that more generally  the signs of
$ P(r_i)$, $i =1, ..., p$ are alternate. We deduce that $P(\tau)$ admits at least $p-1$  distinct roots $\tau_i$, $i=1, ..., p-1,$ satisfying
 $ r_i < \tau_i < r_{i+1}.$ Since $P(\tau)$ has degree $p-1,$ is has no other roots.

\begin{rema} If $\varphi$ is minimal, $\tau=0$ is a root of $P(\tau)$. The corresponding immersion $\bar{\varphi}=(\varphi,0)$ is not only marginally trapped, but minimal.

\end{rema}

%%%%%%%%%%%%%%%%%%%%%%%%%%%%%%%%%%%%%%%%%%%%%%%%%%%%%%%%%%%%%%%%%%%%%%%%%%%%%%%%%%%%%%%%%%%%%%%%%%%%%%%%%%%%%%%%%%%%%%%

 \section{The de Sitter and anti de Sitter cases:  proof of Theorem \ref{unbis}}

\subsection{The de Sitter case}
Let $\bar{\varphi} =(\psi, {\tau}): \M \to d\S^{n+2}$ an immersion such that the induced metric
$\bar{g}:= \bar{\varphi}^\ast \<.,.\>_1$ is spacelike. Let $\bar{ \nu}=( \nu,1)$ be one of the two normalized, null
normal field to $\bar{\varphi}.$ We define the  \em null projection \em of $\bar{\varphi}$ to be $\varphi:= \psi - {\tau} \nu.$
The fact that $(\nu,1) \in T_{\bar{\varphi}} d\S^{n+2},$ i.e.\
$ 0 = \< (\psi, {\tau}),(\nu, 1)\>_1 = \<\psi, \nu\>_0 -{\tau},$ implies that $\<\psi, \nu\>_0  ={\tau}.$
Hence
\begin{eqnarray*}
\<{\varphi}, {\varphi}\>_0 &=& \<\psi,\psi\>_0 -2 {\tau}\<\psi, \nu\>_0 + {\tau}^2 \<\nu,\nu\>_0 \\
&=& \<\psi,\psi\>_0 -{\tau}^2\\
&=& \<\bar{\varphi},\bar{\varphi}\>_1\\
&=& 1 ,
\end{eqnarray*}
which shows that $\varphi$ is $\S^{n+1}$-valued.
The proofs of the next two lemmas are omitted, since they are similar to the Minkowski case:

\begin{lemm}\label{leg}
 The map $(\varphi, \nu) : \M \to \S^{n+1} \times \S^{n+1}$ is an immersion.
\end{lemm}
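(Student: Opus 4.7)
The plan is to mimic closely the Minkowski-case argument used to prove the analogous lemma in Section~2, since the key algebraic manipulations carry over essentially unchanged. The only new ingredient to check is that the ambient and normal structures of $d\S^{n+2}$ do not interfere with the calculation; but because we are working with the restriction of the flat pseudo-metric $\<.,.\>_1$ from $\R^{n+3}_1$ to $d\S^{n+2}$, and because $\bar{\varphi}$ factors through the inclusion $d\S^{n+2} \hookrightarrow \R^{n+3}_1$, all the relevant differentiations can be done in the ambient flat space.

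First I would argue by contradiction: suppose there exists a non-zero $v \in T_x\M$ with $d\varphi(v) = 0$ and $d\nu(v) = 0$. From the defining relation $\psi = \varphi + \tau \nu$ I obtain
\[
d\psi(v) = d\varphi(v) + \tau(x)\, d\nu(v) + d\tau(v)\, \nu(x) = d\tau(v)\, \nu(x),
\]
so that
\[
d\bar{\varphi}(v) = \bigl(d\psi(v), d\tau(v)\bigr) = d\tau(v)\,\bigl(\nu(x), 1\bigr) = d\tau(v)\, \bar{\nu}(x).
\]

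Now I split into two cases. If $d\tau(v) = 0$, then $d\bar{\varphi}(v) = 0$, contradicting the hypothesis that $\bar{\varphi}$ is an immersion. If $d\tau(v) \neq 0$, then $d\bar{\varphi}(v)$ is a non-zero scalar multiple of the null vector $\bar{\nu}(x)$, hence a non-zero null vector lying in $T_x(\bar{\varphi}(\M))$. This contradicts the assumption that $\bar{g} = \bar{\varphi}^\ast\<.,.\>_1$ is positive definite, since a spacelike tangent subspace contains no non-zero null vectors.

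I expect no real obstacle here: the argument is a direct transcription of the Minkowski proof, and the de Sitter ambient plays no role beyond ensuring that $\bar{\nu}$ is well-normalized in the form $(\nu,1)$, which has already been arranged. The one mild point worth flagging is that one should verify that the identity $d\psi = d\varphi + \tau\, d\nu + d\tau\, \nu$ is a legitimate computation in $\R^{n+3}_1$ (not inside $\S^{n+1}$), which is fine because $\varphi$, $\nu$ and $\psi$ are all being viewed as $\R^{n+3}$-valued maps for the purpose of this differentiation.
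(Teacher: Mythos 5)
Your proof is correct and is essentially the paper's own argument: the paper explicitly omits the proof of this lemma, stating that it is similar to the Minkowski case, and your write-up is precisely that transcription (with the minor, harmless refinement of splitting into the cases $d\tau(v)=0$ and $d\tau(v)\neq 0$, whereas the paper simply notes that $d\bar{\varphi}(v)=d\tau(v)\bar{\nu}$ gives $\bar{g}(v,v)=0$, contradicting positive definiteness). Your remark that the differentiation of $\psi=\varphi+\tau\nu$ is carried out in the flat ambient space is also consistent with how the paper treats all such computations.
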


\begin{lemm} We have the following relations:\footnote{This corresponds to the fact that the immersion $(\varphi,\nu)$ is \em Legendrian \em with respect to the canonical contact structure of the unit bundle of $\S^{n+1}$.} 
$$ \<\varphi, \nu\>_0=0 \quad \mbox{ and } \quad
\<d\varphi, \nu\>_0=0.$$
\end{lemm}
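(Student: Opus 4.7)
The plan is to reproduce, with one small addition, the proof of the Minkowski analogue Lemma~\ref{l2}. The novelty is that since $\bar{\varphi}$ now lies on the hyperquadric $d\S^{n+2}\subset\R^{n+3}$, the position-level relation $\<\psi,\nu\>_0=\tau$ is available, and it will be the key ingredient for the first identity; the second identity follows by the same differentiation argument as in the Minkowski case.

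First I would collect the four ambient identities that the hypotheses supply. Since $\bar{\nu}=(\nu,1)$ is null for $\<.,.\>_1$, the relation $\<\bar{\nu},\bar{\nu}\>_1=0$ gives $\<\nu,\nu\>_0=1$ (so $\nu$ is $\S^{n+1}$-valued), whence by differentiation $\<d\nu,\nu\>_0=0$. Since $\bar{\nu}$ is normal to $\bar{\varphi}(\M)$ inside $d\S^{n+2}$, in particular it is tangent to $d\S^{n+2}$, hence orthogonal to the ambient position vector $\bar{\varphi}$: $\<\bar{\varphi},\bar{\nu}\>_1=0$, which unfolds as $\<\psi,\nu\>_0=\tau$ (this is precisely the identity already invoked in the paragraph preceding the lemma to show that $\varphi$ is $\S^{n+1}$-valued). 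Finally, the tangential normality $\<d\bar{\varphi},\bar{\nu}\>_1=0$ unfolds as $\<d\psi,\nu\>_0=d\tau$.

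For the first relation, a one-line computation using the definition $\varphi=\psi-\tau\nu$ yields
$$\<\varphi,\nu\>_0=\<\psi,\nu\>_0-\tau\<\nu,\nu\>_0=\tau-\tau=0.$$
For the second, I would differentiate $\varphi=\psi-\tau\nu$ to obtain $d\varphi=d\psi-(d\tau)\nu-\tau\,d\nu$ and take the $\<\cdot,\nu\>_0$-product:
$$\<d\varphi,\nu\>_0=\<d\psi,\nu\>_0-d\tau\,\<\nu,\nu\>_0-\tau\,\<d\nu,\nu\>_0=d\tau-d\tau-0=0.$$

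There is no genuine obstacle here; the lemma is essentially bookkeeping. The only conceptual point worth stressing is that, compared with the Minkowski case where only the derivative relation was needed, the de Sitter setting produces an extra pointwise identity $\<\varphi,\nu\>_0=0$, which reflects the presence of the ambient position vector $\bar{\varphi}$ as a second normal direction to $d\S^{n+2}\subset\R^{n+3}$; geometrically this is exactly what makes $(\varphi,\nu)$ Legendrian for the contact structure on the unit tangent bundle of $\S^{n+1}$, as the footnote announces.
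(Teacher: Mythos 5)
Your proof is correct and matches the paper's intent: the paper omits the proof, stating only that it is ``similar to the Minkowski case,'' and your argument is exactly that adaptation --- the second identity is the verbatim Minkowski computation from Lemma~\ref{l2}, while the first follows from the position relation $\<\psi,\nu\>_0=\tau$ that the paper has already established in the paragraph preceding the lemma.
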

Unlike in the Minkowski case, there is no vertical translation in $d\S^{n+2}$. 
We may however, up to a arbitrarily small, linear pertubation, assume that $\varphi$ is an immersion.

 \begin{lemm}  Given $x \in \M$ and $ \eps >0,$ there exists a neighbourhood $U$ of $x$,  $\alpha \in (-\eps,\eps)$ and a hyperbolic rotation $R^{\al}$ of angle $\alpha$ such that the null projection $\varphi^{\al}$ of $\bar{\varphi}^{\al} := R^{\al} \bar{\varphi}$ is an immersion.
  \end{lemm}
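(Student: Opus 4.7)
The plan is to imitate the preceding Minkowski lemma, substituting the vertical translations---which are not available in $d\S^{n+2}$---by a one-parameter family of ambient Lorentz boosts. Let $R^\al \in O(n{+}2,1)$ be the hyperbolic rotation of angle $\al$ acting on a fixed timelike $2$-plane $\Pi \subset \R^{n+3}_1$, say $\Pi := \mathrm{span}(e_{n+2}, e_{n+3})$, and as the identity on $\Pi^\perp$. Since $R^\al$ is an ambient isometry preserving $d\S^{n+2}$, the rotated map $\bar{\varphi}^\al := R^\al \bar{\varphi}$ is a spacelike immersion into $d\S^{n+2}$, with null normal $R^\al \bar{\nu}$.

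I would then rescale $R^\al \bar{\nu}$ to the normalized form $(\nu^\al, 1)$; the rescaling factor $c(\al) := \sinh\al\cdot\nu_{n+2} + \cosh\al$ is smooth, positive on a neighborhood of $\al = 0$, and equals $1$ at $\al = 0$. A direct computation, using the decomposition $\psi = \varphi + \tau\nu$, then yields the explicit null projection
\[ \varphi^\al_i = \varphi_i - \frac{\sinh\al\cdot\varphi_{n+2}}{c(\al)}\,\nu_i \quad (i \le n{+}1), \qquad \varphi^\al_{n+2} = \frac{\varphi_{n+2}}{c(\al)}, \]
which exhibits $(\al, x) \mapsto \varphi^\al(x)$ as a real-analytic family of maps $\M \to \S^{n+1}$ reducing to $\varphi$ at $\al = 0$.

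To conclude, I would mimic the Minkowski argument: at each $x \in \M$, if a nonzero $v \in T_x\M$ were to lie in $\ker d\varphi^\al(x) \cap \ker d\varphi^{\al'}(x)$ for two distinct small $\al \neq \al'$, then after clearing the denominators $c(\al), c(\al')$ in the component equations above and taking an appropriate linear combination, one would be forced to conclude $d\varphi(v) = 0$ and $d\nu(v) = 0$ at $x$, contradicting Lemma~\ref{leg}. Hence at each $x$ only finitely many $\al$ can be bad, and a standard semi-analytic argument on a relatively compact neighborhood of the base point then produces $\al \in (-\eps, \eps)$ for which $\varphi^\al$ is an immersion.

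The main obstacle, compared with the Minkowski lemma, is the point-dependent scaling $c(\al)(x)$: the family $\varphi^\al$ is rational rather than linear in $\al$, so the slick kernel-intersection identity used in the translation case must be replaced by a more careful analysis after clearing denominators. I expect this to be the only genuine new difficulty; the semi-analytic passage from pointwise finiteness to a globally good $\al$ is standard, and the rest of the argument is routine.
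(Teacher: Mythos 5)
Your setup is the same as the paper's: conjugate by a one-parameter family of boosts in a timelike $2$-plane containing $e_{n+3}$, compute the null projection of the rotated immersion, and play the kernels for distinct angles against Lemma~\ref{leg}. (The paper uses the plane $\mathrm{span}(e_1,e_{n+3})$ and only expands to first order, $\tau-\tau^\al/\si^\al=\varphi_1\al+o(\al)$, whereas your exact rational formula is correct and if anything cleaner for the two-angle argument.) However, there is a genuine gap in fixing the plane $\Pi=\mathrm{span}(e_{n+2},e_{n+3})$ once and for all. Run your own formula at a point $x$ and a vector $v$ lying in $\ker d\varphi^\al_x\cap\ker d\varphi^{\al'}_x$ for two small distinct angles: the $(n{+}2)$-nd component forces $d\varphi_{n+2}(v)=0$ and $\varphi_{n+2}(x)\,d\nu_{n+2}(v)=0$, and the first $n{+}1$ components then give $d\varphi_i(v)=\frac{\varphi_{n+2}(x)\sinh\al}{c(\al)}\,d\nu_i(v)$ for both angles. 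If $\varphi_{n+2}(x)\neq 0$ this yields $d\varphi(v)=d\nu(v)=0$ and the desired contradiction with Lemma~\ref{leg}; but if $\varphi_{n+2}(x)=0$ you only obtain $d\varphi(v)=0$ and learn nothing about $d\nu(v)$, so there is no contradiction. Worse, at such a point $v\in\ker d\varphi^\al_x$ if and only if $d\varphi_x(v)=0$, \emph{independently of} $\al$: the whole boost family is useless there. This is not a vacuous worry --- in the null-second-fundamental-form case $\varphi$ degenerates to a constant map, which may well land on the equator $\{x_{n+2}=0\}$, and then $\varphi^\al=\varphi$ fails to be an immersion for every $\al$.

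The missing step is exactly why the lemma lets you choose the rotation and not just the angle. The paper's proof concludes from "bad for all $\al$" only that the coordinate $\varphi_1$ singled out by the chosen plane vanishes, and then repeats the argument with the boosts in each plane $\mathrm{span}(e_j,e_{n+3})$, $j=1,\dots,n+2$: if every such family failed, all coordinates of $\varphi$ would vanish at the point, contradicting $\<\varphi,\varphi\>_0=1$. You should graft this onto your argument: since $\varphi$ is $\S^{n+1}$-valued, at each point some coordinate $\varphi_j$ is nonzero, and the corresponding boost family makes your two-angle computation go through. (Separately, your "standard semi-analytic argument" passing from pointwise finiteness of bad angles to a single good $\al$ on a neighborhood deserves at least a sentence --- a union over $x$ of finite sets need not be small --- but the paper is equally terse on this point, and the statement is only needed locally.)
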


\begin{proof} 

Set
$$R^{\al}=\left(\begin{array}{ccc} \cosh \al &  & \sinh \al \\  & Id & \\ \sinh \al &&  \cosh \al \end{array} \right) 
 \in SO(n+2,1)$$
and $\bar{\varphi}^\al := R^{\al} \bar{\varphi},$ $\bar{\nu}^\al:= R^{\al} \bar{\nu}.$
 Observe that $\bar{\nu}^{\al}:=(\nu^\al, \si^\al)$ is not anymore normalized, a priori, since its last component $\si^\al:=\bar{\nu}^{\al}_{n+3} $ is
equal to $\cosh (\al) + \sinh (\al) \nu_1,$ where $\nu_1$ is the first component of the vector $\nu.$

Nevertheless, the null geodesic passing through the point $\bar{\varphi}^\al$ and directed by the vector $\bar{\nu}^{\al}$
crosses the slice $d\S^{n+2} \cap \{  x_{n+3}=0\}$ at the point
$$(\varphi^\al,0):= \left(\psi^\al  - \frac{\tau^\al}{\si^\al} \nu^\al,0 \right).$$
Clearly, $\varphi^\al$ is an immersion
 if and only if $R^{-\al}\varphi^{\al}=\psi - \frac{\tau^\al}{\si^\al} \nu= \varphi +\left(\tau - \frac{\tau^\al}{\si^\al}\right)\nu $ is so.
Observe that 
\begin{eqnarray*} 
\tau - \frac{\tau^\al}{\si^\al}&=& \tau - \frac{\cosh (\al) \tau + \sinh (\al) \psi_1}{\cosh (\al)+ \sinh (\al ) \nu_1}\\
&=& \tau - \frac{ \tau + \tanh (\al) \psi_1}{1+ \tanh (\al ) \nu_1}\\
&=& \tanh (\al) (-\psi_1 + \tau \nu_1 ) + o ( \al)\\
&=& -\al \, \varphi_1 + o ( \al).\end{eqnarray*}
Now, assume that  $R^{-\al}\varphi^\al$ fails to be an immersion in any compact neighbourhood $U$ of $x$, $\forall \al \in (-\eps, \eps)$. Hence there exists 
a sequence   $(x_n, v_n) \in T^1 \M$ (the unit tangent bundle of $\M$), such that $x_n \to x$  and
$ d (R^{-1/n}\varphi^{1/n})_{x_n}(v_n)=~0$.
We have
\begin{eqnarray*} d (R^{-1/n}\varphi^{1/n})_{x_n}(v_n)&=&d\Big( \varphi - \frac{1}{n} \varphi_1 \nu+ o(1/n) \Big)_{x_n} (v_n)\\
 %&=& d\varphi_{x_n}(v_n) -  \frac{1}{n} \Big( (d \varphi_1)_{x_n}(v_n) \nu(x_n) + \varphi_1(x_n) d\nu_{x_n}(v_n)  \Big)+ o(1/n) \\
&=& d\varphi_{x_n}(v_n) -  \frac{1}{n} \Big( (d \varphi_1)_ \nu + \varphi_1 d\nu  \Big)_{x_n} (v_n)+ o(1/n) 
\end{eqnarray*}
Thus there exists a non vanishing $v_0 $ such that a subsequence of $v_n$ tends to $ v_0$ and we obtain
$$ \left\{ \begin{array}{l} d\varphi_x (v_0)=0\\
 (d \varphi_1)_{x}(v_0) \nu(x) +  \varphi_1 (x)d\nu_x(v_0) =0.
\end{array} \right.$$
Remembering that $\varphi_1$ is the first coordinate of $\varphi$, this system implies the vanishing of $\varphi_1 (x) d\nu_x(v_0)$. By Lemma \ref{leg}, $d\nu_x(v_0)$ and $d\varphi_x(v_0)$ cannot vanish simultaneously, therefore $\varphi_1(x)$ vanishes.
Repeating the argument with suitable rotations yields that all the other coordinates of $\varphi(x)$ vanish, a contradiction since $\varphi \in \S^{n+1}.$
\end{proof}

By the previous lemma, since the discussion is local,  we may assume that $\varphi$ is an immersion. The remainder of the proof of Theorem \ref{unbis} follows the lines of Theorem \ref{one},
in particular Lemma \ref{geo} still holds here.

%%%%%%%%%%%%%%%%%%%%%%%%%%%%%%%%%%%%%%%%%%%%%%%%%%%%%%%%%%%%%%%%%%%%%%%%%%%%%%%%%%%%%%%

\subsection{The anti de Sitter case}

Let $\bar{\varphi} =(\psi, {\tau}): \M \to Ad\S^{n+2}$ an immersion such that the induced metric
$\bar{g}:= \bar{\varphi}^* \<.,.\>_2$ is spacelike. Let $\bar{ \nu}=( \nu,1),$ be a  normalized, null
vector field which is normal to $\bar{\varphi}.$ We define the  \em null projection \em of $\bar{\varphi}$ to be ${\varphi}:= \psi - {\tau} \nu.$

The fact that $(\nu,1) \in T_{\bar{\varphi}} Ad\S^{n+2},$ i.e.\
$ 0 = \< (\psi, {\tau}),(\nu, 1)\>_2 = \<\psi, \nu\>_1 -\tau$ implies that $\<\psi, \nu\>_1  ={\tau}.$
Hence
\begin{eqnarray*}
\<{\varphi}, {\varphi}\>_1 &=& \<\psi,\psi\>_1 -2 {\tau} \<\psi, \nu\>_1 +  {\tau}^2 \<\nu,\nu\>_1 \\
&=& \<\psi,\psi\>_1 -{\tau}^2\\
&=& \<\bar{\varphi},\bar{\varphi}\>_2\\
&=& -1 ,
\end{eqnarray*}
which shows that $\varphi$ is $\H^{n+1}$-valued.

The remainder of the proof is similar to the previous case (de Sitter case) and is therefore omitted.

%%%%%%%%%%%%%%%%%%%%%%%%%%%%%%%%%%%%%%%%%%%%%%%%%%%%%%%%%%%%%%%%%%%%%%%%%%%%%%%%%%%%%%%

\section{The  case of the product of a space form by the real line:  proof of Theorems \ref{two} and \ref{twobis}}

\subsection{The $\S^{n+1} \times \R$ case}

Let $\bar{\varphi} =(\psi, \tau): \M \to \S^{n+1} \times \R$ an immersion such that the induced metric
$\bar{g}:= \bar{\varphi}^*\<.,.\>_1$ is spacelike. Let $\bar{ \nu}=( \nu,1),$ where $\nu \in \S^{n+1},$ be a  normalized, null
normal field along $\bar{\varphi}.$ We set $\varphi:= \cos (\tau) \psi - \sin (\tau) \nu$ and $\nu_{\varphi}=\sin(\tau) \psi + \cos (\tau) \nu.$

\begin{lemm} \label{l1esferico}
 The map $(\varphi, \nu_{\varphi}) : \M \to \S^{n+1} \times \S^{n+1}$ is an immersion. 
\end{lemm}

\begin{lemm} \label{l2esferico}
$$\<d\varphi, \nu_{\varphi}\>_0=0.$$
\end{lemm}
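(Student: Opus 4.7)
The plan is to verify the identity by direct differentiation, exactly as in the Minkowski and de Sitter cases. First I would write down the orthogonality relations forced by the setup: (i) since $\psi$ and $\nu$ both lie on $\S^{n+1}$, we have $\<\psi,\psi\>_0 = \<\nu,\nu\>_0 = 1$, hence $\<d\psi,\psi\>_0 = \<d\nu,\nu\>_0 = 0$; (ii) the fact that $\bar\nu = (\nu,1)$ is tangent to $\S^{n+1}\times\R$ at $\bar\varphi$ forces $\nu \in T_\psi\S^{n+1}$, so $\<\psi,\nu\>_0 = 0$, which upon differentiation yields $\<d\psi,\nu\>_0 + \<\psi,d\nu\>_0 = 0$; (iii) the normality condition $\<d\bar\varphi,\bar\nu\> = 0$ in the product metric $\<.,.\>_0 - dx_{n+3}^2$ gives $\<d\psi,\nu\>_0 = d\tau$, and combined with (ii) this yields $\<d\nu,\psi\>_0 = -d\tau$.

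Next I would differentiate $\varphi = \cos(\tau)\psi - \sin(\tau)\nu$ and regroup the terms so that the $d\tau$-contributions assemble into $\nu_\varphi$:
\[
d\varphi = \cos(\tau)\, d\psi - \sin(\tau)\, d\nu - d\tau\,\nu_\varphi.
\]
Pairing with $\nu_\varphi = \sin(\tau)\psi + \cos(\tau)\nu$ and invoking (i)--(iii), a short calculation gives $\<\nu_\varphi,\nu_\varphi\>_0 = 1$, $\<d\psi,\nu_\varphi\>_0 = \cos(\tau)\, d\tau$, and $\<d\nu,\nu_\varphi\>_0 = -\sin(\tau)\, d\tau$, so that
\[
\<d\varphi,\nu_\varphi\>_0 = \cos^2(\tau)\, d\tau + \sin^2(\tau)\, d\tau - d\tau = 0.
\]

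The computation is entirely mechanical once the identities (i)--(iii) are in hand, so there is no genuine obstacle. The one observation worth flagging is conceptual: the pair $(\varphi,\nu_\varphi)$ is the image of $(\psi,\nu)$ under the rotation of angle $\tau$ in the plane $\mathrm{Span}(\psi,\nu) \subset \R^{n+2}$, and this rotation has been calibrated precisely to absorb the $d\tau$ contribution coming from the Legendrian-type relation $\<d\psi,\nu\>_0 = d\tau$. The lemma is thus the product-geometry analogue, for $\S^{n+1}\times\R$, of Lemma \ref{l2} in the Minkowski case and of its de Sitter counterpart, and it will play the same role in the subsequent argument (namely, that $\nu_\varphi$ can be recognized as the Gauss map of $\varphi$ once $\varphi$ is shown, after a suitable perturbation, to be an immersion into $\S^{n+1}$).
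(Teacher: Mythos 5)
Your proof is correct and follows essentially the same route as the paper: both rest on the normality relation $\<d\bar{\varphi},\bar{\nu}\>_1=0$ (equivalently $\<d\psi,\nu\>_0=d\tau$) together with the orthogonality identities coming from $\psi,\nu\in\S^{n+1}$ and $\<\psi,\nu\>_0=0$. The only cosmetic difference is the direction of the expansion: you write $d\varphi$ in terms of $d\psi$ and $d\nu$, whereas the paper writes $d\psi=\cos(\tau)\,d\varphi+\sin(\tau)\,d\nu_{\varphi}+d\tau\,\nu$ and substitutes into $\<d\psi,\nu\>_0-d\tau=0$; the underlying computation is identical.
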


 \begin{lemm}  \label{l3esferico}
Given $x \in \M$ and $ \eps >0,$ there exists a neighbourhood $U$ of $x$ and $t_0 \in (-\eps, \eps)$ such that $\cos(t_0) \varphi + \sin(t_0) \nu_{\varphi}$ is an immersion of $U$, and
 $\cos(t_0) \nu_{\varphi} -\sin(t_0) \varphi$ is its Gauss map. 
  \end{lemm}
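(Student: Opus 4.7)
\bigskip

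The plan is to mimic the vertical-translation argument of the Minkowski case (and the hyperbolic-rotation argument of the de Sitter case), but replacing the one-parameter family by the orthogonal rotation in the 2-plane $\text{span}(\varphi,\nu_{\varphi}) \subset \R^{n+2}$. Write $\varphi_t := \cos(t)\varphi + \sin(t)\nu_{\varphi}$ and $\nu_{\varphi_t} := -\sin(t)\varphi + \cos(t)\nu_{\varphi}$. The objective is to show that for every $\eps>0$ there is $t_0 \in (-\eps,\eps)$ such that $\varphi_{t_0}$ is an immersion with Gauss map $\nu_{\varphi_{t_0}}$.

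First I would identify, for each $t$, the kernel of $d\varphi_t = \cos(t)\,d\varphi + \sin(t)\,d\nu_{\varphi}$. If $v \in T\M$ lies in $\ker d\varphi_t \cap \ker d\varphi_{t'}$ for $t,t'$ with $t-t'\notin \pi\Z$, then the $2\times 2$ linear system
$$
\begin{pmatrix} \cos t & \sin t \\ \cos t' & \sin t' \end{pmatrix}
\begin{pmatrix} d\varphi(v) \\ d\nu_{\varphi}(v) \end{pmatrix}
= \begin{pmatrix} 0 \\ 0 \end{pmatrix}
$$
has non-zero determinant $\sin(t'-t)$, so $d\varphi(v) = d\nu_{\varphi}(v) = 0$; since $(\varphi,\nu_{\varphi})$ is an immersion (first lemma of the subsection), this forces $v=0$. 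Consequently, within any interval of length strictly less than $\pi$, the non-trivial subspaces $\ker d\varphi_t$ pairwise intersect only in $\{0\}$, and since they sit inside the $n$-dimensional space $T\M$, at most $n$ values of $t$ in that interval give a non-trivial kernel. Restricting to $\eps < \pi/2$ (no loss of generality) yields a $t_0 \in (-\eps,\eps)$ for which $\varphi_{t_0}$ is an immersion.

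Next I would check that $\nu_{\varphi_{t_0}}$ is indeed the Gauss map of $\varphi_{t_0}$, using the collection of orthogonality relations available in $\S^{n+1}$: the identities $\<\varphi,\varphi\>_0 = \<\nu_{\varphi},\nu_{\varphi}\>_0 = 1$ and $\<\varphi,\nu_{\varphi}\>_0 = 0$ (immediate from $\<\psi,\psi\>_0 = \<\nu,\nu\>_0 = 1$ and $\<\psi,\nu\>_0 = 0$, since $\bar\nu \in T_{\bar\varphi}(\S^{n+1}\times\R)$) imply, upon differentiation, that $\<d\varphi,\varphi\>_0 = \<d\nu_{\varphi},\nu_{\varphi}\>_0 = 0$ and $\<d\nu_{\varphi},\varphi\>_0 = -\<\nu_{\varphi},d\varphi\>_0 = 0$ (the last equality being exactly Lemma \ref{l2esferico}). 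Expanding $\<d\varphi_{t_0},\nu_{\varphi_{t_0}}\>_0$ by bilinearity, each of the four resulting terms vanishes, and a similar expansion shows $|\nu_{\varphi_{t_0}}|_0 = 1$ and $\<\varphi_{t_0},\nu_{\varphi_{t_0}}\>_0 = 0$, so $\nu_{\varphi_{t_0}}$ is a unit tangent vector to $\S^{n+1}$ at $\varphi_{t_0}$ orthogonal to the image of $d\varphi_{t_0}$.

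I do not expect a genuine obstacle here: the argument is the spherical analogue of the linear Minkowski translation, with the finite pencil $\{d\varphi + t\, d\nu\}_{t\in\R}$ replaced by the circular pencil $\{\cos(t)\, d\varphi + \sin(t)\, d\nu_{\varphi}\}_{t \in \R/\pi\Z}$. The only point requiring care is to keep the rotation small enough (length $<\pi$) so that the pairwise-disjoint-kernels argument applies inside a single fundamental interval, which is harmless since we only need existence of $t_0$ in an arbitrarily small neighborhood of $0$.
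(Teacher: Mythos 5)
Your proposal is correct and follows essentially the same route as the paper: the pairwise-trivial intersection of the kernels $\ker(\cos(t)\,d\varphi+\sin(t)\,d\nu_{\varphi})$ for $t-t'\notin\pi\Z$ inside the $n$-dimensional space $T\M$, followed by the orthogonality relations of Lemma \ref{l2esferico} to identify the Gauss map. Your explicit restriction to an interval of length less than $\pi$ is a small but welcome precision over the paper's phrasing (the bad set is $\pi$-periodic, so it is only finite modulo $\pi$), and the rest matches the paper's argument step for step.
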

 
The proof of Lemmas \ref{l1esferico}, \ref{l2esferico} and \ref{l3esferico} is similar to that of Lemmas \ref{l1mink}, \ref{l2} and \ref{l3mink} of Section \ref{mink} and is therefore ommited. Since we are working locally, Lemma  \ref{l3esferico} proves that, up to a vertical translation, we may assume that $\varphi$ is  an immersion.

\begin{lemm}
Denote by $g= \varphi^*\<.,.\>_0$ the metric induced on $\M$ by $\varphi$ and $A$ the shaped operator associated to $\nu$. Then the metric $\bar{g}=\bar{\varphi}^* \<.,.\>_1$ induced on $\M$ by $\bar{\varphi}$ is given by the formula 
$$
\bar{g}=\cos^{2}(\tau) g(.,.) - 2\sin(\tau)\cos(\tau)g(A.,.) + \sin^{2}(\tau) g(A.,A.).
$$
In particular, the non-degeneracy assumption on $\bar{g}$ implies to $\cot(\tau)$ is not equal to a principal curvature of $\varphi$. Moreover, 
$$
\bar{h}_{\bar{\nu}} := \<\bar{h}(.,.) , \bar{\nu}\>_1=(\cos ^{2}(\tau) - \sin^{2}(\tau))g(A.,.) + \sin(\tau)\cos(\tau)(g(.,.)-g(A.,A.)),
$$
and 
$$
\<\vec{H}_{\bar{\varphi}},\bar{ \nu}\>_1 = \frac{1}{n} \sum_{i=1}^{n} \dfrac{\ka_i+\tan(\tau)}{1 - \tan(\tau)\ka_i},
$$
where the $\ka_i$ are the principal curvatures of $\varphi$.
\end{lemm}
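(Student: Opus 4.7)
The plan is to mirror the proof of Lemma \ref{geo} from the Minkowski case, with the affine-in-$\tau$ coefficients replaced throughout by trigonometric ones. The technical ingredients are the rotation identities $\psi = \cos(\tau)\varphi + \sin(\tau)\nu_\varphi$ and $\nu = -\sin(\tau)\varphi + \cos(\tau)\nu_\varphi$ (so $(\psi,\nu)$ is a pointwise $\tau$-rotation of $(\varphi, \nu_\varphi)$ in a common $2$-plane), Lemma \ref{l2esferico}, its consequence $\<d\nu_\varphi, \varphi\>_0 = 0$ obtained by differentiating $\<\varphi, \nu_\varphi\>_0 = 0$, the standard orthogonalities $\<d\varphi, \varphi\>_0 = \<d\nu_\varphi, \nu_\varphi\>_0 = 0$, and the Weingarten identification $d\nu_\varphi = -d\varphi \circ A$.

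First I would differentiate $\psi = \cos(\tau)\varphi + \sin(\tau)\nu_\varphi$ to obtain $d\psi = \cos(\tau)\,d\varphi + \sin(\tau)\,d\nu_\varphi + d\tau\cdot\nu$ and plug this into
$$\bar{g}(X,Y) = \<d\psi(X), d\psi(Y)\>_0 - d\tau(X)d\tau(Y).$$
All cross terms involving $d\tau$ collapse: those pairing $\nu$ with $d\varphi$ or $d\nu_\varphi$ vanish by the orthogonality relations just listed (since $\nu$ is a linear combination of $\varphi$ and $\nu_\varphi$), while the diagonal $\<\nu,\nu\>_0\,d\tau\otimes d\tau$ contribution exactly cancels $-d\tau\otimes d\tau$. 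The surviving terms, rewritten via $\<d\varphi(X), d\nu_\varphi(Y)\>_0 = -g(AX, Y)$ and $\<d\nu_\varphi(X), d\nu_\varphi(Y)\>_0 = g(AX, AY)$, yield the announced formula for $\bar{g}$; non-degeneracy in a principal frame reduces to $\cot(\tau) \neq \ka_i$ for each $i$.

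For $\bar{h}_{\bar{\nu}}$ I would use $\bar{h}_{\bar{\nu}}(X,Y) = -\<D^{\mathcal N}_X \bar{\nu}, Y\>_1$, where $D^{\mathcal N}$ is the Levi-Civita connection of $\mathcal N = \S^{n+1}\times\R$. Embedding $\mathcal N$ isometrically into $(\R^{n+3}, \<\cdot,\cdot\>_0 - dx_{n+3}^2)$, the difference between $D^{\mathcal N}$ and the flat derivative is normal to $\mathcal N$ and drops out on pairing with the $\mathcal N$-tangent vector $d\bar{\varphi}(Y)$, leaving $\bar{h}_{\bar{\nu}}(X,Y) = -\<d\nu(X), d\psi(Y)\>_0$. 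Expanding $d\nu = -d\tau\,\psi - \sin(\tau)\,d\varphi + \cos(\tau)\,d\nu_\varphi$, the $d\tau$ piece vanishes because $\<\psi, d\psi\>_0 = 0$, and the remainder collapses via the same Weingarten and orthogonality identities to the claimed expression.

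Finally, in a principal orthonormal frame $(e_i)$ of $(\M, g)$, both tensors diagonalize and factor cleanly:
$$\bar{g}(e_i,e_i) = \bigl(\cos(\tau) - \sin(\tau)\ka_i\bigr)^2, \qquad \bar{h}_{\bar{\nu}}(e_i,e_i) = \bigl(\cos(\tau) - \sin(\tau)\ka_i\bigr)\bigl(\sin(\tau) + \cos(\tau)\ka_i\bigr).$$
Dividing cancels one factor and produces $(\ka_i + \tan(\tau))/(1 - \tan(\tau)\ka_i)$; averaging over $i$ then gives the formula for $\<\vec H_{\bar\varphi}, \bar\nu\>_1$. The only real obstacle is bookkeeping: keeping the trigonometric prefactors straight across the half-dozen bilinear expansions. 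Conceptually everything reduces to the observation that the entire computation is the Minkowski argument post-composed with a pointwise $\tau$-rotation of the frame $(\varphi,\nu_\varphi)$ to $(\psi,\nu)$.
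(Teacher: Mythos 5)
Your computation is correct and is exactly the direct verification the paper leaves implicit here: it reproduces the proof of Lemma \ref{geo} from the Minkowski case with the affine coefficients $1,\tau$ replaced by $\cos\tau,\sin\tau$, using the same orthogonality relations ($\<d\varphi,\varphi\>_0=\<d\varphi,\nu_\varphi\>_0=\<d\nu_\varphi,\nu_\varphi\>_0=0$), the Weingarten relation, and diagonalization in a principal frame, and the factorizations $\bar g(e_i,e_i)=(\cos\tau-\sin\tau\ka_i)^2$, $\bar h_{\bar\nu}(e_i,e_i)=(\cos\tau-\sin\tau\ka_i)(\sin\tau+\cos\tau\ka_i)$ all check out. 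No gaps worth noting.
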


We first claim that if  $\bar{h}_{\bar{\nu}}$
vanishes, it must be totally geodesic: according to the previous lemma, this implies
  $A=\pm \ka Id, $ (hence $\ka$ is constant) and  $\ka = \cot (\tau+\pi/2).$ Then a routine calculation shows that the shape operator
of $\psi=\cos(\tau) \varphi + \sin (\tau)\nu$ vanishes, i.e.\ $\psi$ is totally geodesic. Since the height function $\tau$ is constant, $\bar{\varphi}$ is totally geodesic itself.

We now label the principal curvatures $\kappa_{1}<...< \kappa_{p}$, taking into account their multiplicity $m_i$. Hence $\<\vec{H}_{\bar{\varphi}}, \bar{\nu}\>_1$ vanishes if and only if
\begin{equation*}\label{mt}
\sum_{i=1}^{p} m_i\dfrac{\ka_i +\tan(\tau)}{1-\tan(\tau)\ka_i}=0.
\end{equation*}
Introducing  $s:=\cot(\tau),$ we see that $\bar{\varphi}$ is marginally trapped with respect to $\bar{\nu}$ if and only if the following polynomial vanishes:

\begin{equation*}\label{pmt}
P(s):= \sum_{i=1}^{p}m_i (\kappa_i s+1)\prod_{j\neq i}^{p}(s-\kappa_j)=0.
\end{equation*}
It is easy to check that signs of $P(\kappa_i)$ are alternate. Therefore, the polynomial $P(s)$ admits at least  $p-1$ distinct roots $s_i$ such that $\ka_i< s_i < \ka_{i+1}.$ In particular, the degree of $P(s)$ is at least $p-1.$
Since the term of degree $p$ of $P(s)$ is $ \sum_{i=1}^p m_i \ka_i= nH$, 
it has degree  $p-1$ when $\varphi$ is minimal and
degree $p$ otherwise. In the first case, since we already found $p-1$ roots, we conclude that there are exactly $p-1$ roots.
Observe moreover that if $\varphi$ is minimal and $\tau=0$ (which corresponds to $s= \pm \infty$), the immersion $\bar{\varphi}=(\varphi,0)$ is not only
 marginally trapped, but also minimal.
 In the non-minimal case, by looking at $\lim_{s \to \pm \infty} \frac{P(s)}{s^p},$ we check that there exists one more root $s_p$
 in $(-\infty, \kappa_1)$ or in $( \kappa_p, \infty)$, depending on whether $p$ is even or odd and $H$ is positive or negative.
%A special  attention must be paid to the case $p=2$ and $\varphi$ being minimal. In this case $P(s)$ is a polynomial of degree $1$ whose unique %root is
%$$ s =- \frac{m_1 \ka_2 + m_2 \ka_1}{n (\ka_1 \ka_2 -1)}.$$
% $n=2,$ since $(m_1,m_2)=(1,1)$  the solution above becomes $s=0,$ so the corresponding immersion $\bar{\varphi}=(\varphi,0)$ is actually 
%minimal. If $n>2,$ the map $s$ is not constant and $\bar{\varphi}$ is not minimal.

The conclusion of Theorem \ref{two} comes from the formula
$$\big(\cos(\cot^{-1} s), \sin (\cot^{-1} s) \big)= \left(\frac{s}{\sqrt{1+s^2}} , \frac{1}{\sqrt{1+s^2} }\right).$$
Finally, if $n=2,$ and $\varphi$ is not minimal, setting $a:=\frac{\kappa_1 \kappa_2-1}{\ka_1+\ka_2}$, the polynomial $P(s)$ is equivalent to $s^2 - 2a s-1=0,$
whose two distinct roots are
$s_{\pm}= a\pm \sqrt{a^2 + 1}.$
Hence
$$
\tau_{\pm}= (\cot)^{-1}\left(a \pm \sqrt{a^2 + 1}\right),
$$
so we get the required formula. Observe however that if $a=0, $ then $\tau$ is constant and  moreover  $\psi = \frac{1}{\sqrt{2}}(\pm \varphi  + \nu)$ is minimal, so again $\bar{\varphi}$ is  minimal.

%%%%%%%%%%%%%%%%%%%%%%%%%%%%%%%%%%%%%%%%%%%%%%%%%%%%%%%%%
\subsection{The $\H^{n+1} \times \R$ case}
Let $\bar{\varphi} =(\psi, \tau): \M \to \H^{n+1} \times \R$ an immersion whose induced metric
 is spacelike. Let $\bar{ \nu}=( \nu,1),$ where $\nu \in d\S^{n+1},$ be a  normalized, null
normal field along $\bar{\varphi}.$ We set $\varphi:= \cosh (\tau) \psi + \sinh (\tau) \nu.$ 
Reasoning like in the previous cases, we easily prove that, up to a vertical translation and reasoning locally, we may assume that $\varphi$ is an immersion and that $\nu_{\varphi}:=\sinh(\tau) \psi + \cosh (\tau) \nu$ is its Gauss map. Moreover, the non-degeneracy assumption on the induced metric on $\bar{\varphi}$ implies that $\coth(\tau)$ is not equal to a principal curvature $\ka_i$  of $\varphi$. Finally, counting the principal curvatures with their multiplicity $m_i$, we have that  $\<\vec{H}_{\bar{\varphi}},\bar{ \nu}\>_1$ 
vanishes if and only if $\sum_{i=1}^{p} m_i\dfrac{\ka_i-\tanh(\tau)}{1 - \tanh(\tau)\ka_i}$ vanishes as well.
Hence, if $s_i$ is a root of the polynomial 
$$P(s):= \sum_{i=1}^{p}m_i (\kappa_i s-1)\prod_{j\neq i}^{p}(s-\kappa_j)$$
satisfying  in addition $|s_i|>1$, the immersion
\begin{eqnarray*} \bar{\varphi}_i &:=&
\left(\cosh (\coth^{-1} (s_i) )\varphi + \sinh (\coth^{-1} (s_i)) \nu, \coth^{-1} (s_i) \right)\\
&=& \left(  \frac{s_i \varphi + \nu}{\sqrt{s_i^2-1}}, \coth^{-1} (s_i)\right)
\end{eqnarray*}
is marginally trapped.

\medskip

It seems difficult to determine exactly the number $q$ of roots of $P(s)$ such that $|s|>1.$ 
However, observe that 
given  a monotone sequence of $\ka_i$ such that 
$\ka_i<-1,$  $|\ka_i|<1$ or $\ka_i>1,$
the signs of
$P(\ka_i)=m_i (\ka_i^2-1) \prod_{j \neq i}^p (\ka_i -\ka_j)$ are alternate. Hence introducing 
\begin{eqnarray*}
\al&:=&  \# \{ \ka_i,  1 \leq i \leq p, \ka_i <-1\} ,\\
\be &:=&  \# \{ \ka_i,  1 \leq i \leq p,  |\ka_i| <1\}, \\
\ga &:=&  \# \{ \ka_i,  1 \leq i \leq p, \ka_i >1\} ,\\
\delta&:=&  \# \{ \ka_i,  1 \leq i \leq p,  |\ka|=1\} ,
\end{eqnarray*}
We deduce that there exist $\al-1$ roots $s_i$ satisfying $ \ka_i < s_i  <\ka_{i+1} < -1$, giving rise to $\al-1$ marginally trapped immersions
$\bar{\varphi}_i$. Analogously there exist $\ga-1$ solutions satisfying $ 1 <\ka_i <s_i  < \ka_{i+1} .$ Analysing the signs of
$\lim_{s \to \pm \infty} \frac{P(s)}{s^{p}}$ as in the $\S^{n+1} \times \R$ case, we see that  if $\sum_{i=1}^p m_i \ka_i = nH$ does not vanish, the existence of one more solution 
$s \in  (-\infty , \inf_i  {\ka_i} )\cup (\sup_i {\ka_i} , + \infty ) $ is granted. On the other hand, if $\be \neq0, $ the $\be-1$ roots satisfying $-1 < \ka_i <s_i < \ka_{i+1} <1$ lead to no marginally trapped immersion. Finally, if $1$ or $-1$ is a principal curvature, it is also a root of $P(s)$, which again corresponds to no marginally trapped immersion. Finally, if $\varphi$ is not minimal, we obtain the following inequalities:
$$  \al + \ga -1 \leq q \leq p - (\be-1) - \delta=\al+\ga +1   .$$

If $n=2$ and $\varphi$ is not minimal, the polynomial $P(s)$ is equivalent to $s^2 -2as +1, $ where we set $a :=\frac{\ka_1 \ka_2+1}{\ka_1+ \ka_2}.$ Without loss of generality we assume that $a>0.$ If $a<1,$ $P(s)$ has no real solution and if $a=1$, the unique solution is $s=1.$ Finally, if $a>1,$ the two distinct roots of $P(s)$ are $a\pm \sqrt{a^2 - 1},$ one of which is less than one and the other greater than one. 
Hence we get 
$
\tau:= (\cot)^{-1}\left(a + \sqrt{a^2 - 1}\right),
$
so we get the required formula.  Observe that if $|\ka_1| >1 $ and $|\ka_2|>1,$ we have $\al+\ga-1=q=1,$ i.e.\ the left hand side inequality above is sharp.
%%%%%%%%%%%%%%%%%%%%%%%%%%%%%%%%%%%%%%%%%%%%%%%%%%%%%%%%%%%%%%%%%%%%%%%%%%%%%%%

\section{Examples} \label{examples}

Here we briefly discuss how some of the examples of \cite{CVdV}  can be recovered from our construction.

\medskip

The two families of marginally trapped surfaces found by B.-Y. Chen and J. Van der Veken in $\R^4_1$ are:
$$L_{1}(x,y):=( x,y ,f(x), f(x)),$$ 
where $f$ is an arbitrary differentiable function with $f''(x)$ being nowhere zero, and 
$$L_{2}(x,y):= \left(y\cos x -\int_{0}^{x} r(x)\sin x  dx, y \sin x+\int_{0}^{x} r(x)\cos x  dx, \right.$$ 
$$\left.   q(x)y + \int_0^{x}r(x)q'(x)dx, q(x)y  +  \int_0^{x}r(x)q'(x)dx \right),$$
where $q$ and $r$ are defined on an open interval $I\ni 0$ satisfying $q''(x)+q(x)\neq 0$ for each $x\in I$.

Thus  $L_{1}(x,y)=(\iota(x,y),0) + f(x) (\nu_0,1)$, where $\nu_0=(0,0,1),$
so we are in the first case (null second fundamental form) of  Theorem \ref{one}.  Moreover, since
$$ T(x,y) :=\left(y\cos x -\int_{0}^{x} r(x)\sin x  \, dx, y \sin x+\int_{0}^{x} r(x)\cos x  \, dx \right)$$
is simply a reparametrization of an open subset of the plane, setting
$$\tau(x,y):= q(x)y + \int_0^{x}r(x)q'(x)dx ,$$
the second family takes the form $L_{2}(x,y)=(\iota \circ T (x,y)+ \tau(x,y)\nu_0,\tau(x,y))$,
so we are again in the case of null second fundamental form.

\medskip

Next, consider the immersion in $d\S^4$ given by
$$L_{3}(x,y):=\big(\sin x  \cos y , \sin y, \cos x  \cos y, f(x)\cos y , f(x)\cos y \big)$$ 
where $f$ is an arbitrary differentiable function defined on an open interval $I$ satisfying $f'' + f\neq 0$ at each point in $I.$ It
takes the form
\begin{eqnarray*}
 L_3(x,y) &= &(\sin x  \cos y, \sin y, \cos x  \cos y,0,0) + f(x) \cos y (0,0,0,1,1)\\
&=&(\iota(x,y),0) + \tau(x,y)(\nu,1),\end{eqnarray*}
where  $\tau(x,y):= f(x) \sin y$, $ \nu:=(0,0,0,1)$ and
 $\iota(x,y) : \S^2 \to \S^3$ is the totally geodesic embedding given in coordinates by 
$\iota(x,y)=(\sin x \cos y , \sin y , \cos x \cos y,0) .$  Hence $L_3$ has null second fundamental form (first case of Theorem \ref{unbis}).

\medskip

Finally, consider the immersion in $Ad\S^4$ given by
$$L_4(x,y):=\left( e^y- 2 \sinh y, x e^y, x^2 e^y - \frac{1}{2}e^y,\frac{3}{2}e^y - 2 \sinh y,x^2 e^y \right).$$
A straightgforward calculation shows that a normalized, null normal vector along $L_4$ is $\bar{\nu}=(-1,0,1,-1,1)=(\nu,1)$. Since $\bar{\nu}$ is constant,
$\<\bar{h}(.,.), \bar{\nu}\>_2$ vanishes, so is in particular the second fundamental form $\bar{h}(.,.)$ is null and $L_4$ is marginally trapped. Observe moreover that
$\varphi:=\psi - \tau \nu =\psi - x^2 e^y (-1,0,1,-1)$ is an immersion whose normal unit vector $\nu=(-1,0,1,-1)$ is constant, therefore $\varphi$ is totally geodesic.

\medskip 

We leave to the reader the easy task to check that all other examples of \cite{CVdV} have null second fundamental form.

%%%%%%%%%%%%%%%%%%%%%%%%%%%%%%%%%%%%%%%%%%%%%%%%%%%%%%%%%%%%%%%%%%%%%%%%%%%%%%%%

\break
 
 \noindent

\begin{multicols}{2}

\noindent   Henri Anciaux \\
Universit\'e Libre de Bruxelles \\
  CP 216, local O.7.110  \\
   Bd du Triomphe \\
 1050 Brussels, Belgium   \\
henri.anciaux@gmail.com \\

\columnbreak

\noindent Yamile Godoy \\ 
  FaMAF-CIEM, \\Ciudad Universitaria, \\
5000 C\' ordoba, Argentina 
\\
 yamile.godoy@gmail.com
\end{multicols}

\end{document}